\documentclass{amsart}
\usepackage{amssymb, latexsym}
\usepackage{url}

\DeclareMathOperator{\Aut}{\mathrm{Aut}}

\DeclareMathOperator{\rank}{\mathrm{rank}}
\DeclareMathOperator{\Reg}{\mathrm{Reg}}
\DeclareMathOperator{\sgn}{\mathrm{sgn}}

\DeclareMathOperator{\Tor}{\mathrm{Tor}}
\DeclareMathOperator{\Vol}{\mathrm{Vol}}

\begin{document}
 \bibliographystyle{plain}

 \newtheorem{theorem}{Theorem}
 \newtheorem{lemma}{Lemma}
 \newtheorem{corollary}{Corollary}
 \newtheorem{conjecture}{Conjecture}
 \newtheorem{definition}{Definition}
 \newcommand{\mc}{\mathcal}
 \newcommand{\A}{\mc{A}}
 \newcommand{\B}{\mc{B}}
 \newcommand{\cc}{\mc{C}}
 \newcommand{\D}{\mc{D}}
 \newcommand{\E}{\mc{E}}
 \newcommand{\F}{\mc{F}}
 \newcommand{\G}{\mc{G}}
 \newcommand{\eH}{\mc{H}}
 \newcommand{\FN}{\F_n}
 \newcommand{\I}{\mc{I}}
 \newcommand{\J}{\mc{J}}
 \newcommand{\eL}{\mc{L}}
 \newcommand{\M}{\mc{M}}
 \newcommand{\eN}{\mc{N}}
 \newcommand{\qq}{\mc{Q}}
 \newcommand{\U}{\mc{U}}
 \newcommand{\X}{\mc{X}}
 \newcommand{\Y}{\mc{Y}}
 \newcommand{\eZ}{\mc{Z}}
 \newcommand{\C}{\mathbb{C}}
 \newcommand{\R}{\mathbb{R}}
 \newcommand{\N}{\mathbb{N}}
 \newcommand{\Q}{\mathbb{Q}}
 \newcommand{\Z}{\mathbb{Z}}
 \newcommand{\aA}{\mathfrak A}
 \newcommand{\bB}{\mathfrak B}
 \newcommand{\ff}{\mathfrak F}
 \newcommand{\uU}{\mathfrak U}
 \newcommand{\fb}{f_{\beta}}
 \newcommand{\fg}{f_{\gamma}}
 \newcommand{\gb}{g_{\beta}}
 \newcommand{\vphi}{\varphi}
 \newcommand{\bo}{\boldsymbol 0}
 \newcommand{\bone}{\boldsymbol 1}
 \newcommand{\ba}{\boldsymbol a}
 \newcommand{\balpha}{\boldsymbol \alpha}
 \newcommand{\bb}{\boldsymbol b}
 \newcommand{\bbbeta}{\boldsymbol \beta}
 \newcommand{\bm}{\boldsymbol m}
 \newcommand{\bgamma}{\boldsymbol \gamma}
 \newcommand{\bt}{\boldsymbol t}
 \newcommand{\bu}{\boldsymbol u}
 \newcommand{\bv}{\boldsymbol v}
 \newcommand{\bx}{\boldsymbol x}
 \newcommand{\bwy}{\boldsymbol y}
 \newcommand{\bxi}{\boldsymbol \xi}
 \newcommand{\bbeta}{\boldsymbol \eta}
 \newcommand{\bw}{\boldsymbol w}
 \newcommand{\bz}{\boldsymbol z}
 \newcommand{\whG}{\widehat{G}}
 \newcommand{\oK}{\overline{K}}
 \newcommand{\oKt}{\overline{K}^{\times}}
 \newcommand{\oQ}{\overline{\Q}}
 \newcommand{\oq}{\oQ^{\times}}
 \newcommand{\oQt}{\oQ^{\times}/\Tor\bigl(\oQ^{\times}\bigr)}
 \newcommand{\ot}{\Tor\bigl(\oQ^{\times}\bigr)}
 \newcommand{\h}{\frac12}
 \newcommand{\hh}{\tfrac12}
 \newcommand{\dx}{\text{\rm d}x}
 \newcommand{\dy}{\text{\rm d}y}
 \newcommand{\dmu}{\text{\rm d}\mu}
 \newcommand{\dnu}{\text{\rm d}\nu}
 \newcommand{\dla}{\text{\rm d}\lambda}
 \newcommand{\dlav}{\text{\rm d}\lambda_v}
 \newcommand{\trho}{\widetilde{\rho}}
 \newcommand{\dtrho}{\text{\rm d}\widetilde{\rho}}
 \newcommand{\drho}{\text{\rm d}\rho}
 \def\today{\number\time, \ifcase\month\or
  January\or February\or March\or April\or May\or June\or
  July\or August\or September\or October\or November\or December\fi
  \space\number\day, \number\year}

\title[Heights on groups]{Heights on groups and\\small multiplicative dependencies}
\author{Jeffrey~D.~Vaaler}
\subjclass[2000]{11J25, 11R04, 46B04}
\keywords{Weil height}
\thanks{This research was supported by the National Science Foundation, DMS-06-03282.}
\address{Department of Mathematics, University of Texas, Austin, Texas 78712 USA}

\email{vaaler@math.utexas.edu}
\numberwithin{equation}{section}

\begin{abstract}  We generalize the absolute logarithmic Weil height from elements of the multiplicative group
$\oQt$ to finitely generated subgoups of $\oQt$.  The height of a finitely generated subgroup is
shown to equal the volume of a certain naturally occurring, convex, symmetric subset of Euclidean space.
This connection leads to a bound on the norm of integer vectors that give multiplicative dependencies
among finite sets of algebraic numbers.
\end{abstract}

\maketitle

\section{Introduction}

Let $\oQ$ denote the field of algebraic numbers, and write $\oq$ for its multiplicative group of
nonzero elements. Let $\alpha_1, \alpha_2, \dots , \alpha_N$ be a finite, nonempty collection of 
elements in $\oq$, and assume that these elements generate a multiplicative subgroup
\begin{equation}\label{setup1}
A = \Big\{\prod_{n=1}^N \alpha_n^{\xi_n} : \bxi \in \Z^N\Big\}
\end{equation}
of positive, torsion free rank $M$.  If $k$ is an algebraic number field and $A \subseteq k^{\times}$, then the torsion subgroup $\Tor(A)$
is contained in the subgroup of roots of unity in $k^{\times}$, and therefore $\Tor(A)$ is a finite cyclic group.
We assume that $1 \le M < N$.  Then the elements $\alpha_1, \alpha_2, \dots , \alpha_N$ are
multiplicatively dependent, and the set of multiplicative dependencies
\begin{equation}\label{setup2}
\eZ = \Big\{\bz \in \Z^N : \prod_{n=1}^N \alpha_n^{z_n} \in \Tor(A)\Big\}
\end{equation}
is a $\Z$-module of positive rank $L = N-M$.  In this paper we consider the problem of showing that $\eZ$
contains a nonzero vector $\bz$ for which the norm 
\begin{equation*}\label{setup3}
|\bz|_{\infty} = \max\{|z_1|, |z_2|, \dots , |z_N|\}
\end{equation*}
is not too large.  More generally, we will show that $\eZ$ contains $L$ linearly independent vectors
$\bz_1, \bz_2, \dots , \bz_L$ such that the product
\begin{equation}\label{setup4}
\prod_{l=1}^L |\bz_l|_{\infty}
\end{equation}
is not too large.  We will establish bounds on the product (\ref{setup4}) which depend on
the absolute logarithmic Weil height of the numbers $\alpha_1, \alpha_2, \dots , \alpha_N$.  If
$A \subseteq k^{\times}$, then we give more explicit bounds for (\ref{setup4}) which depend
on the heights of the numbers $\alpha_1, \alpha_2, \dots , \alpha_N$ and also on the field $k$.  

Let $h:\oq \rightarrow [0, \infty)$ denote the absolute logarithmic Weil height. 
If $\alpha$ is an element of $\oq$ and $\zeta$ is an
element in the torsion subgroup $\ot$, then it is well known that $h(\alpha \zeta) = h(\alpha)$.  Let $\G = \oQt$
denote the corresponding quotient group.  It follows that 
\begin{equation*}\label{setup4.5}
h:\G\rightarrow [0, \infty)
\end{equation*}
is well defined on the cosets of $\G$.  We note that $\G$ has
the structure of a vector space (written multiplicatively) over the field $\Q$ of rational numbers.  
To see this let $r/s$ denote a rational number, where $r$ and $s$ are relatively prime integers 
and $s$ is positive.  If $\alpha$ is in $\oq$ and $\zeta_1$ and $\zeta_2$ are in $\ot$, then all roots of the 
two polynomial equations
\begin{equation*}
x^s - (\zeta_1 \alpha)^r = 0\quad\text{and}\quad x^s - (\zeta_2 \alpha)^r = 0
\end{equation*}
belong to the same coset in $\G$.  If we write $\alpha^{r/s}$ for this coset, we
find that 
\begin{equation*}
(r/s, \alpha)\mapsto \alpha^{r/s}
\end{equation*}  
defines a scalar multiplication in the abelian group $\G$.  It follows that $\G$ is a
vector space (written multiplicatively) over the field $\Q$.  We can also draw this conclusion from the
structure theory for the multiplicative group of an algebraically closed field (see \cite[Theorem 77.1]{fuchs1960}).

If $\alpha$ and $\beta$ are points in $\G$, then from basic
properties of the height (see \cite[section 1.5]{bombieri2006})
we find that $(\alpha, \beta)\mapsto h\bigl(\alpha \beta^{-1}\bigr)$ defines a metric on
the vector space $\G$.  If $r/s$ is an element of $\Q$ as above, and $\alpha$ belongs to $\G$, we have the well known 
identity (see \cite[Lemma 1.5.18]{bombieri2006})
\begin{equation*}\label{setup5}
h\bigl(\alpha^{r/s}\bigr) = |r/s|h(\alpha).
\end{equation*}
It follows that $\alpha \mapsto h(\alpha)$ defines a norm on the vector space $\G$ with respect to the usual
archimedean absolute value on the field $\Q$ of scalars.  In this paper we will make use of the completion of $\G$
with respect to this norm.  The completion is a Banach space over the field $\R$ of real numbers, and the structure
of this Banach space was determined in \cite[Theorem 1]{all2009}.

Rather than working with elements 
$\alpha_1, \alpha_2, \dots , \alpha_N$ in $\oq$, it will be convenient throughout the remainder 
of this paper to write $\alpha_1, \alpha_2, \dots , \alpha_N$ for the image of these points in $\G$.  And we write
\begin{equation}\label{setup6}
\aA = \Big\{\prod_{n=1}^N \alpha_n^{\xi_n} : \bxi \in \Z^N\Big\}
\end{equation}
for the subgroup of rank $M$ generated by $\alpha_1, \alpha_2, \dots , \alpha_N$ in $\G$.  Then the $\Z$-module $\eZ$ of 
multiplicative dependencies is given by
\begin{equation}\label{setup7}
\eZ = \Big\{\bz \in \Z^N : \prod_{n=1}^N \alpha_n^{z_n} = 1\Big\}.
\end{equation}

We may regard the logarithmic Weil height $h$ as defined on the collection of all finitely generated subgroups 
$\aA\subseteq \G$ having rank $1$.  For if $\aA$ has 
rank $1$, if $\aA$ is generated by the coset $\alpha_1$ and also by the coset $\alpha_2$ in $\G$, then we must have either
$\alpha_1 = \alpha_2$ or $\alpha_1^{-1} = \alpha_2$.  As $h(\alpha_1) = h(\alpha_1^{-1})$, we find that 
\begin{equation*}\label{setup20}
h(\aA) = h(\alpha_1) = h(\alpha_2)
\end{equation*}
determines a well defined height on the subgroup $\aA$.  In this paper we extend the absolute logarithmic 
Weil height to the collection of all finitely generated subgroups of $\G$ having arbitrary positive rank.  To accomplish 
this we will make use of the isometric isomorphism $\alpha \mapsto f_{\alpha}(y)$ that was identified in \cite[equation (5.1)]{all2009}.
In particular, this map associates each point $\alpha$ in $\G$ with a
certain continuous, compactly supported, real valued function $y\mapsto f_{\alpha}(y)$, defined on the locally compact
Hausdorff space $Y$ of all places of $\oQ$. 
Once the height has been extended to finitely generated subgroups of $\G$, as in (\ref{ht25}), we will show that every finitely 
generated subgroup contains multiplicatively independent elements such that the product of the heights of the independent
elements is majorized by the height of the subgroup. The precise result is as follows.

\begin{theorem}\label{thm1}  Let $\aA \subseteq \G$ be a finitely generated subgroup of positive rank $N$, and let the height 
$h(\aA)$ be defined by {\rm (\ref{ht25})}.
Then there exist multiplicatively independent elements $\beta_1, \beta_2, \dots , \beta_N$ in $\aA$ such that
\begin{equation}\label{setup25}
h(\beta_1) h(\beta_2) \cdots h(\beta_N) \le h(\aA).
\end{equation}
Moreover, the subgroup $\bB\subseteq \aA$ generated by $\beta_1, \beta_2, \dots , \beta_N$, has index at most $N!$ in $\aA$.
\end{theorem}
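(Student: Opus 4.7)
Proof proposal. The plan is to use the isometric embedding $\alpha \mapsto f_{\alpha}$ from \cite{all2009} to realize $\aA$ as a full-rank lattice in a finite-dimensional real normed space, and then to apply Minkowski's second theorem on successive minima.

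As a finitely generated subgroup of rank $N$ in the $\Q$-vector space $\G$, the group $\aA$ is free abelian on $N$ generators. Under the isometric embedding, it maps to a rank-$N$ lattice $\Lambda$ inside the $N$-dimensional real subspace $V$ that it spans, with the restriction of $h$ to $V$ being a norm. From the form of the stated inequality \eqref{setup25}, I anticipate that definition \eqref{ht25} identifies $h(\aA)$ with a volume-theoretic quantity such as $2^N \det(\Lambda)/\Vol(B)$, where $B = \{v \in V : h(v) \le 1\}$ is the unit ball of the norm and both $\det$ and $\Vol$ are computed against a common Haar measure on $V$.

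Under this normalization, the inequality \eqref{setup25} is immediate from Minkowski's second theorem applied to $\Lambda$ and the symmetric convex body $B$. It produces linearly independent elements $\beta_1, \beta_2, \ldots, \beta_N \in \Lambda$ with $h(\beta_i) = \lambda_i$ equal to the $i$-th successive minimum, together with
$$\lambda_1 \lambda_2 \cdots \lambda_N \cdot \Vol(B) \;\le\; 2^N \det(\Lambda),$$
which rearranges to $h(\beta_1) h(\beta_2) \cdots h(\beta_N) \le h(\aA)$. For the index bound, I would invoke the classical fact that linearly independent lattice vectors attaining the successive minima generate a sublattice of index at most $N!$ in the ambient lattice: concretely, fix a basis of $\aA$, express each $\beta_i$ as an integer combination of that basis, reduce the resulting integer matrix to Hermite normal form, and bound the product of diagonal entries using the minimality defining each $\lambda_i$.

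The main obstacle is constant matching: one must verify from the explicit form of \eqref{ht25} that $h(\aA)$ carries precisely the factor $2^N$ and the measure normalization on $V$ that make the Minkowski bound come out as \eqref{setup25} with no stray constants. If \eqref{ht25} is instead expressed through a polar or dual convex body rather than the unit ball $B$ itself, an additional duality step will be needed to bridge the definitions, but the overall geometry-of-numbers strategy is unchanged, and both conclusions of the theorem reduce to well-known consequences of Minkowski's second theorem together with the standard estimate on the index of a successive-minima sublattice.
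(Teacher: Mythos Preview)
Your overall strategy---embed $\aA$ as a lattice via $\alpha\mapsto f_\alpha$ and apply Minkowski's second theorem, then the standard index bound---is exactly what the paper does. However, your ``constant matching'' paragraph underestimates what is actually required, and the anticipated identity $h(\aA)=2^N\det(\Lambda)/\Vol(B)$ is \emph{not} what (\ref{ht25}) says. By the paper's Theorem~\ref{thm3}, the integral in (\ref{ht25}) computes the volume of the \emph{dual} body: $h(\aA)=2^{-N}\int_{Y^N}|\Delta|\,\dla^N=\tfrac{N!}{4^N}\Vol_N(B^*)$, where $B$ is the unit ball for the norm $\|T(\cdot)\|_1=2h(\cdot)$ on $\R^N$. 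Minkowski alone then gives only $\prod_n h(\beta_n)=2^{-N}\prod_n\mu_n\le 1/\Vol_N(B)$, and there is no equality relating $1/\Vol_N(B)$ to $\tfrac{N!}{4^N}\Vol_N(B^*)$ in general.

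The ``additional duality step'' you allude to is therefore the substance of the proof, not a normalization check: one needs the inequality $\Vol_N(B)\Vol_N(B^*)\ge 4^N/N!$, which is Mahler's volume-product conjecture. The paper gets it by first showing (Lemma~\ref{lem2}) that $B^*$ is a zonoid whenever the norm comes from an $L^1$-embedding, and then invoking Reisner's theorem, which establishes Mahler's conjecture for zonoids. Combining Reisner with the volume formula of Theorem~\ref{thm3} yields $2^N\le \Vol_N(B)\int|\Delta|$, and only then does Minkowski's second theorem close the argument. So your sketch is correct in outline but omits the two genuinely nontrivial inputs---the zonoid volume formula and Reisner's inequality---without which the constants do not match.
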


We will obtain Theorem \ref{thm1} from a more general result of the same sort which applies to finitely generated subgroups contained in the
completion of $\G$ with respect to the metric induced in $\G$ by the Weil height.  The main result in \cite{all2009} established an
isometric isomorphism between the completion of $\G$ and a closed subspace $\X$ contained in a 
certain real Banach space $L^1(Y, \B, \lambda)$.  Theorem \ref{thm4} and Corollary \ref{cor3} provide a generalization of Theorem
\ref{thm1} to arbitrary lattices of finite rank contained in an arbitrary $L^1$-space.  In particular, the more
general assertions of Theorem \ref{thm4} and Corollary \ref{cor3} make no reference to algebraic numbers.  The connection with
algebraic numbers and the space $\G$ arises from the results obtained in \cite{all2009}.

As an illustration of Theorem \ref{thm1}, we consider the special case in which $\aA$ is the image of a group of $S$-units.  Toward
this end, let $k$ be an algebraic number field, and let $S$ be a finite collection of places of $k$ such that $S$ contains 
all the archimedean places of $k$.  Write
\begin{equation}\label{setup30}
U_S(k) = \{\gamma \in k^{\times}: |\gamma|_v = 1\ \text{for all places}\ v\notin S\}
\end{equation}
for the multiplicative group of $S$-units in $k^{\times}$.  We assume that $|S| = s \ge 2$ so that $U_S(k)$ has positive, torsion free 
rank $s - 1$, and we write $\uU_S(k)$ for the image of $U_S(k)$ in $\G_k$.  Obviously, $\uU_S(k)$ also has 
positive rank $s - 1$.  

\begin{theorem}\label{cor2}  Let $k$ be an algebraic number field, $U_S(k)$ the multiplicative group of $S$-units in $k^{\times}$,
and write $\uU_S(k)$ for the image of $U_S(k)$ in $\G_k$.  If $|S| = s \ge 2$ then
\begin{equation}\label{setup31}
h\bigl(\uU_S(k)\bigr) = \dfrac{s! \Reg_S(k)}{\bigl(2 [k: \Q]\bigr)^{s-1}},
\end{equation}
where $\Reg_S(k)$ denotes the $S$-regulator of $U_S(k)$.
\end{theorem}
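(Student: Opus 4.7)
The strategy is to apply the isometric embedding $\alpha\mapsto f_\alpha$ from \cite{all2009} to turn the computation of $h(\uU_S(k))$ into an explicit volume calculation in a finite-dimensional Euclidean space, then to recognize that volume as $s!\,\Reg_S(k)/(2[k:\Q])^{s-1}$ by way of the log-embedding of $S$-units.

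\textbf{Step 1 (Identification of the ambient space).} Under the embedding of \cite{all2009}, the real span of the image of $\uU_S(k)$ in $L^1(Y,\B,\lambda)$ is an $(s-1)$-dimensional subspace which is canonically isometric to the weighted hyperplane
\[
H=\Bigl\{\bx\in\R^s:\sum_{v\in S}[k_v:\Q_v]\,x_v=0\Bigr\},\qquad \|\bx\|_{*}=\frac{1}{2[k:\Q]}\sum_{v\in S}[k_v:\Q_v]\,|x_v|,
\]
via the correspondence $f_\gamma\longleftrightarrow(\log|\gamma|_v)_{v\in S}$. The identity $\|f_\gamma\|_1=h(\gamma)$ reduces under this correspondence to the elementary relation $\log^+ x=\tfrac12(|\log x|+\log x)$ combined with the product formula.

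\textbf{Step 2 (Identification of the lattice).} Fix a basis $\epsilon_1,\dots,\epsilon_{s-1}$ of $U_S(k)/\ot$. The image of $\uU_S(k)$ is the lattice $\Lambda\subseteq H$ spanned by the column vectors of the $s\times(s-1)$ matrix
\[
M=\bigl([k_v:\Q_v]\log|\epsilon_j|_v\bigr)_{v\in S,\ j=1,\dots,s-1}.
\]
Because the rows of $M$ sum to zero (product formula), the vector of signed $(s-1)\times(s-1)$ minors of $M$ lies in a one-dimensional subspace and is proportional to $(1,1,\dots,1)$; consequently all these minors are equal (up to sign) to $\Reg_S(k)$, and
\[
\bigl|\det\bigl[\,M\,\bigm|\,(1,\dots,1)^T\,\bigr]\bigr|\;=\;s\,\Reg_S(k).
\]

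\textbf{Step 3 (Evaluation of $h(\uU_S(k))$).} I would then unwind the definition (\ref{ht25}) of $h(\aA)$ in this finite-dimensional setting. As announced in the abstract, (\ref{ht25}) expresses $h(\aA)$ as the volume of a naturally occurring, convex, symmetric subset of Euclidean space; specialized to $\uU_S(k)$, this volume is computed in $H$ with respect to the Haar measure calibrated by the weighted $\ell^1$-norm of Step~1. Using the basis $\epsilon_1,\dots,\epsilon_{s-1}$ as coordinates on $\uU_S(k)\otimes\R$, the relevant volume reduces to an explicit polytope in $\R^{s-1}$ built from the $s$ vectors $f_v$ corresponding to the $s$ places of $S$ (which are linearly dependent with a single relation coming from the product formula). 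Assembling the three contributions — the combinatorial factor $(s-1)!$ from the simplex/crosspolytope normalization present in (\ref{ht25}), the determinantal factor $s\,\Reg_S(k)$ from Step~2, and the scaling factor $(2[k:\Q])^{-(s-1)}$ from the weighted $\ell^1$-norm — yields
\[
h\bigl(\uU_S(k)\bigr)=\frac{(s-1)!\cdot s\,\Reg_S(k)}{(2[k:\Q])^{s-1}}=\frac{s!\,\Reg_S(k)}{(2[k:\Q])^{s-1}}.
\]

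\textbf{Main obstacle.} The algebraic ingredients — the product formula forcing equality of all $(s-1)\times(s-1)$ minors of $M$, and the definition of $\Reg_S(k)$ as such a minor — are standard, and the determinantal identity $|\det[M\,|\,\bone]|=s\,\Reg_S(k)$ is essentially forced. The real work is in Step~3: faithfully translating the definition (\ref{ht25}) into an explicit polytope volume in $H$ and tracking the normalizing constants (the $(s-1)!$ from the standard crosspolytope/simplex volume, the $2^{s-1}$ from symmetrization, and the $[k:\Q]^{s-1}$ from the weighted $\ell^1$-norm) so that they combine cleanly to give $s!/(2[k:\Q])^{s-1}$. The extra factor of $s$ (promoting $(s-1)!$ to $s!$) is not cosmetic: it reflects the fact that $s$ places contribute to the polytope while the lattice has rank only $s-1$, the mismatch being mediated by the product-formula relation $\sum_{v\in S}[k_v:\Q_v]\log|\gamma|_v=0$.
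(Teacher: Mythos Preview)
Your Steps~1 and~2 assemble the right raw materials, and your observation that every $(s-1)\times(s-1)$ minor of the log-matrix equals $\pm\Reg_S(k)$ is exactly the algebraic core of the argument. The gap is Step~3: you do not actually compute anything there. You list three factors that ought to appear and assert that they combine correctly, but you never connect the definition~(\ref{ht25}) to any concrete polytope, nor do you say which volume you are computing or how the augmented determinant $\bigl|\det[M\,|\,\bone]\bigr|$ enters that volume. As written, Step~3 is a wish, not a calculation.

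The paper's route is more direct and avoids the volume interpretation altogether. The key observation you are missing is that each $f_{\eta_l}$ is a \emph{simple} function on $Y$: writing $E_v=\{y\in Y:y\mid v\}$, one has $f_{\eta_l}(y)=\sum_{v\in S}\log\|\eta_l\|_v\,\chi_{E_v}(y)$. This places the integrand of~(\ref{ht25}) squarely in the setting of the identity~(\ref{pre102}) derived in the proof of Theorem~\ref{thm3} (equivalently, McMullen's formula~(\ref{zon20}) for zonotope volumes). That identity gives immediately
\[
\int_{Y^{s-1}}\bigl|\Delta(f_{\eta_1},\dots,f_{\eta_{s-1}};\bwy)\bigr|\,\dla^{s-1}(\bwy)
=(s-1)!\sum_{\substack{I\subseteq S\\|I|=s-1}}\bigl|\det A_I\bigr|\prod_{v\in I}\lambda(E_v),
\]
with $A=(\log\|\eta_l\|_v)$. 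Since $\lambda(E_v)=[k_v:\Q_v]/[k:\Q]$ by Lemma~\ref{lem6}, each of the $s$ summands equals $[k:\Q]^{1-s}\Reg_S(k)$, and the formula drops out after multiplying by $2^{1-s}$.

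Two remarks on your setup. First, the quantity that actually appears is $\sum_{I}\bigl|\det M_I\bigr|$, not $\bigl|\det[M\,|\,\bone]\bigr|$; these agree numerically here only because all the minors coincide, so your augmented-determinant identity is a detour rather than the mechanism. Second, the factor $s$ arises transparently as the number of $(s-1)$-subsets of $S$, not from any covolume-in-a-hyperplane argument; once you use~(\ref{pre102}) there is nothing to mediate.
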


By combining Theorem \ref{thm1} and (\ref{setup31}), we find that $\uU_S(k)$ contains multiplicatively independent elements
$\gamma_1, \gamma_2, \dots , \gamma_{s-1}$, such that
\begin{equation}\label{setup32}
h(\gamma_1) h(\gamma_2) \cdots h(\gamma_{s-1}) \le \dfrac{s! \Reg_S(k)}{\bigl(2 [k:\Q]\bigr)^{s-1}}.
\end{equation} 
This sharpens similar inequalities due to Brindza \cite{brindza1991} and Hajdu \cite{hajdu1993}, and is of
essentially the same quality as that obtained by Bugeaud and Gy\"ory \cite[Lemma 1]{bugeaud1996}.  Of course (\ref{setup25})
applies more generally to finitely generated subgroups of $\G$ that do not necessarily occur as the image of a group of $S$-units.

Our second main result is an inequality that bounds both the product (\ref{setup4}) and the product of the heights 
of $M$ multiplicatively independent elements from the group $\aA$.  The upper bound depends only on the rank $M$ and on
the heights of the generators $\alpha_1, \alpha_2, \dots , \alpha_N$. 
Earlier results on problems of this sort were established by Baker \cite{baker1968} and 
Stark \cite[Lemma 7]{stark1972}.  Then sharper and more explict bounds were obtained in papers
of Loxton and van der Poorten \cite{loxton1983}, \cite{alf1976}, \cite{alf1977}.  Their
results provide a bound on the norm of one nonzero element in $\eZ$.  A somewhat simpler,  
explicit bound on the norm of one nonzero element in $\eZ$ has recently been given by 
Loher and Masser \cite[Corollary 3.2]{loher2004}, and attributed to K.~Yu.  The result we prove here
is roughly comparable to, but sharper and simpler than, inequalities of Matveev \cite[Theorem 4]{matveev1994}
and Bertrand \cite[Theorem 2]{bertrand1997}.

\begin{theorem}\label{thm2}  Let $\alpha_1, \alpha_2, \dots , \alpha_N$ be
elements of the vector space $\G$ which generate a subgroup $\aA$ of positive rank $M$.
If $1 \le M < N$ then there exist $L = N-M$ linearly independent elements $\bz_1, \bz_2, \dots , \bz_L$ in the 
$\Z$-module $\eZ$ defined by {\rm (\ref{setup7})}, and $M$ multiplicatively independent elements 
$\beta_1, \beta_2, \dots , \beta_M$ in $\aA$, such that 
\begin{equation}\label{setup8}
\bigg\{\prod_{l=1}^L |\bz_l|_{\infty}\bigg\}\bigg\{\prod_{m=1}^M h(\beta_m)\bigg\}\le \bigg\{\sum_{n=1}^N h(\alpha_n)\bigg\}^M.
\end{equation}
\end{theorem}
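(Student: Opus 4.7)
My plan is to split the bound \eqref{setup8} into a ``kernel part'' involving $\eZ$ and an ``image part'' involving $\aA$, handling the image part via Theorem \ref{thm1} and the kernel part via Minkowski's theorem on successive minima. Applying Theorem \ref{thm1} directly to $\aA$ (which has rank $M$) produces multiplicatively independent $\beta_1,\ldots,\beta_M\in\aA$ with $\prod_{m=1}^M h(\beta_m)\le h(\aA)$. It therefore suffices to produce linearly independent $\bz_1,\ldots,\bz_L\in\eZ$ such that
\[
\prod_{l=1}^L |\bz_l|_\infty \le \frac{H^M}{h(\aA)}, \qquad H := \sum_{n=1}^N h(\alpha_n).
\]

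To produce such $\bz_l$, I would use the isometric embedding $\G\hookrightarrow X$ of \cite{all2009} via $\alpha\mapsto f_\alpha$ and define the linear map $T:\R^N\to X$ by $T(\bxi)=\sum_n \xi_n f_{\alpha_n}$. Its kernel $V$ is the $\R$-span of $\eZ$, of dimension $L$, and $\eZ=\Z^N\cap V$. Minkowski's theorem on successive minima, applied to $\eZ$ in $V$ with respect to the convex symmetric body $K=V\cap[-1,1]^N$, supplies successive minima $\mu_1\le\cdots\le\mu_L$ realized by linearly independent $\bz_l\in\eZ$ with $|\bz_l|_\infty=\mu_l$, and satisfies
\[
\mu_1\cdots\mu_L\cdot\Vol_L(K)\le 2^L\, d(\eZ,V),
\]
where $d(\eZ,V)$ denotes the covolume of $\eZ$ in $V$.

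To dominate the right-hand side by $H^M/h(\aA)$, I would invoke the identification \eqref{ht25} of $h(\aA)$ as the volume of a naturally occurring convex symmetric body, combined with a Fubini decomposition of $\Vol_N([-1,1]^N)=2^N$ along the short exact sequence $0\to V\to\R^N\to W\to 0$, where $W=\R^N/V$ carries the norm pulled back from $\|\cdot\|_X$ via $T$. Subadditivity of $h$ gives $T([-1,1]^N)\subseteq\{w\in W:\|w\|_X\le H\}$. Since \eqref{ht25} identifies $h(\aA)$ with the covolume of the image lattice $\aA=T(\Z^N)\subseteq W$ normalized by the volume of the $X$-unit ball in $W$, a determinant/volume cancellation in the Fubini identity should yield
\[
2^L\, d(\eZ,V)\cdot h(\aA)\le H^M\,\Vol_L(K),
\]
which combines with the Minkowski bound to finish the proof.

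The main obstacle will be this last step: matching the constants in the volume identity so that the factors of $2^N$, $L!$, and $M!$ cancel to deliver the sharp constant $1$ on the right-hand side of \eqref{setup8}. This requires both the precise $L^1$ structure of $X$ from \cite{all2009} and the exact normalization built into \eqref{ht25}. I expect this careful book-keeping of volumes and determinants --- rather than the Minkowski application itself or the reduction via Theorem \ref{thm1} --- to constitute the technical core of the proof.
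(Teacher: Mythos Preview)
Your reduction is exactly the paper's: apply Theorem~\ref{thm1} to obtain the $\beta_m$ with $\prod_m h(\beta_m)\le h(\aA)$, and then prove the ``kernel bound''
\[
\Big\{\prod_{l=1}^L |\bz_l|_{\infty}\Big\}\,h(\aA)\ \le\ \Big\{\sum_{n=1}^N h(\alpha_n)\Big\}^M.
\]
The divergence is in how this kernel bound is obtained.

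The paper does \emph{not} argue via a Fubini decomposition of the cube along $0\to V\to\R^N\to W\to 0$. Instead it fixes a basis $\gamma_1,\dots,\gamma_M$ of $\aA$, writes $f_{\alpha_n}=\sum_m a_{mn}f_{\gamma_m}$ with an integer $M\times N$ matrix $A$, and invokes the Bombieri--Vaaler form of Siegel's Lemma to get $\prod_l|\bz_l|_\infty\le\{\det AA^T\}^{1/2}$ (with the gcd $D=1$ because $\gamma$ is a basis). The link to $h(\aA)$ is then purely \emph{algebraic}: from $U(\balpha;\bwy)=V(\bgamma;\bwy)A$ one gets the pointwise identity
\[
Q(\balpha;\bwy)=\{\det AA^T\}^{1/2}\,\bigl|\Delta(f_{\gamma_1},\dots,f_{\gamma_M};\bwy)\bigr|,
\]
and integrating over $Y^M$ yields $\{\det AA^T\}^{1/2}\,h(\aA)=2^{-M}\int_{Y^M}Q$, which is $\le H^M$ by Hadamard's inequality.

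Your proposed mechanism has two gaps. First, your Minkowski step on $K=V\cap[-1,1]^N$ gives $\prod_l\mu_l\le 2^L d(\eZ,V)/\Vol_L(K)$; to proceed you need $\Vol_L(K)\ge 2^L$, which is precisely Vaaler's cube-slicing theorem --- the nontrivial core of the Bombieri--Vaaler Siegel's Lemma --- and you have not identified it. Second, your description of $h(\aA)$ as ``the covolume of the image lattice normalized by the volume of the $X$-unit ball in $W$'' does not match \eqref{ht25}; $h(\aA)$ is (up to constants) the volume of the \emph{dual} unit ball $B^*$ by Theorem~\ref{thm3}, not a covolume, and the connection to $d(\eZ,V)=\{\det AA^T\}^{1/2}$ comes from the determinantal identity above, not from a Fubini-type volume splitting. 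A Fubini argument on $[-1,1]^N$ will not produce the sharp constant because the fibers over $W$ do not have constant volume, and there is no natural Lebesgue measure on the infinite-dimensional target of $T$ to make sense of ``covolume of $\aA$ in $W$'' directly.

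In short: keep your reduction, but replace the Fubini heuristic by (i) Siegel's Lemma (or equivalently Minkowski plus cube-slicing) to bound $\prod|\bz_l|_\infty$ by $\{\det AA^T\}^{1/2}$, and (ii) the Cauchy--Binet/determinantal identity plus Hadamard to convert $\{\det AA^T\}^{1/2}$ into $H^M/h(\aA)$.
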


For each algebraic number field $k$, we write $\G_k$ for the image of $k^{\times}$ in $\G$ under 
the canonical homomorphism.  It follows that $\G_k$ is a subgroup of $\G$, and clearly $\G_k$ is isomorphic 
to the quotient group $k^{\times}/\Tor\bigl(k^{\times}\bigr)$.  For each positive integer $M$ we define
\begin{align}\label{setup10}
\begin{split}
\eH(k,M) = \inf\big\{&h(\gamma_1) h(\gamma_2) \cdots h(\gamma_M) : \gamma_1, \gamma_2, \dots , \gamma_M\\
                          &\ \text{are multiplicatively independent elements in $\G_k$}\big\}.
\end{split}                               
\end{align}
It follows from a well known result of Kronecker \cite{kronecker1857} that $0 < h(\gamma)$ for all points $\gamma \not= 1$
in $\G_k$.  And by Northcott's theorem \cite{northcott1949} there are only finitely many points $\gamma$ in $\G_k$ with 
$h(\gamma)$ less than a positive constant.  Hence the infimum on the right of (\ref{setup10}) is achieved, and 
in particular the value of $\eH(k, M)$ is positive.  
It is conjectured (see \cite[Conjecture 4.4.6]{bombieri2006}) that for each positive integer $M$ there exists a positive 
constant $c(M)$ such that
\begin{equation}\label{setup12}
\frac{c(M)}{[k: \Q]} \le \eH(k, M)
\end{equation}
for each algebraic number field $k$.  The existence of such a constant $c(M)$ in case $M = 1$, would provide an answer to a well
know question first posed by D.~H.~Lehmer \cite{lehmer1933}.  Then (\ref{setup12}) is a natural generalization of 
Lehmer's problem to higher dimensions.  An unconditional result in the direction of the
conjectured lower bound (\ref{setup12}) has been obtained by Amoroso and David \cite{amoroso1999}.  They prove that
for each positive integer $M$ there exists a positive constant $c_1(M)$ such that
\begin{equation}\label{setup14}
\frac{c_1(M)}{[k: \Q]} \bigl(\log (3[k: \Q])\bigr)^{-M \kappa(M)} \le \eH(k, M),
\end{equation}
where $\kappa(M) = (M+1)\{(M+1)!\}^M - 1$.  

If $\alpha_1, \alpha_2, \dots , \alpha_N$ belong to $\G_k$, then the number $\eH(k, M)$ defined by (\ref{setup10}) 
is clearly a lower bound for the second product that appears on the left of (\ref{setup8}).  This leads to the following 
inequality.

\begin{corollary}\label{cor1}  Let $k$ be an algebraic number field.  Let $\alpha_1, \alpha_2, \dots , \alpha_N$ be
elements of the multiplicative group $\G_k$ which generate a subgroup $\aA$ of positive rank $M$.
If $1 \le M < N$ then there exist $L = N-M$ linearly independent elements $\bz_1, \bz_2, \dots , \bz_L$ in the 
$\Z$-module $\eZ$ defined by {\rm (\ref{setup7})}, such that
\begin{equation}\label{setup17}
\prod_{l=1}^L |\bz_l|_{\infty} \le \eH(k, M)^{-1} \bigg\{\sum_{n=1}^N h(\alpha_n)\bigg\}^M.
\end{equation}
\end{corollary}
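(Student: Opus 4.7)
The plan is to derive Corollary \ref{cor1} as an essentially immediate consequence of Theorem \ref{thm2} combined with the definition of $\eH(k, M)$ in (\ref{setup10}).

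First I would apply Theorem \ref{thm2} directly to the elements $\alpha_1, \alpha_2, \dots , \alpha_N$, which by hypothesis lie in $\G_k \subseteq \G$ and generate a subgroup $\aA$ of positive rank $M$ with $1 \le M < N$. This produces $L = N - M$ linearly independent vectors $\bz_1, \bz_2, \dots , \bz_L$ in the $\Z$-module $\eZ$ of (\ref{setup7}), together with $M$ multiplicatively independent elements $\beta_1, \beta_2, \dots , \beta_M \in \aA$, satisfying
\begin{equation*}
\bigg\{\prod_{l=1}^L |\bz_l|_{\infty}\bigg\}\bigg\{\prod_{m=1}^M h(\beta_m)\bigg\}\le \bigg\{\sum_{n=1}^N h(\alpha_n)\bigg\}^M.
\end{equation*}

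Next I would observe that since $\aA \subseteq \G_k$, the elements $\beta_1, \dots , \beta_M$ are multiplicatively independent elements of $\G_k$. By the definition (\ref{setup10}) of $\eH(k, M)$, taking the infimum over all such $M$-tuples gives
\begin{equation*}
\eH(k, M) \le \prod_{m=1}^M h(\beta_m).
\end{equation*}
Dividing through in the inequality from Theorem \ref{thm2} (which is legitimate since the displayed infimum is strictly positive by the Kronecker/Northcott remarks recalled just after (\ref{setup10})), we obtain
\begin{equation*}
\prod_{l=1}^L |\bz_l|_{\infty} \le \eH(k, M)^{-1} \bigg\{\sum_{n=1}^N h(\alpha_n)\bigg\}^M,
\end{equation*}
which is precisely (\ref{setup17}).

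There is no real obstacle here: the content is entirely carried by Theorem \ref{thm2}, and the corollary amounts to trading the second factor on the left of (\ref{setup8}) for its field-dependent lower bound $\eH(k, M)$. The only point to verify carefully is that the $\beta_m$ produced by Theorem \ref{thm2} genuinely land in $\G_k$ so that the definition of $\eH(k, M)$ applies, which is immediate from $\beta_m \in \aA \subseteq \G_k$, together with the positivity of $\eH(k, M)$ so that inversion is harmless.
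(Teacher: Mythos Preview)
Your argument is correct and matches the paper's own reasoning: the paper does not give a separate proof of Corollary~\ref{cor1}, but simply notes just before its statement that $\eH(k,M)$ is a lower bound for the product $\prod_{m=1}^M h(\beta_m)$ appearing on the left of (\ref{setup8}), which is exactly the step you carry out.
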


Under the hypotheses of Corollary \ref{cor1}, the inequalities (\ref{setup14}) and (\ref{setup17}) can be combined.  This plainly 
leads to the more explicit bound
\begin{equation}\label{setup18}
\prod_{l=1}^L |\bz_l|_{\infty} \le \frac{[k: \Q]}{c_1(M)}\bigl(\log (3[k: \Q])\bigr)^{M \kappa(M)} \bigg\{\sum_{n=1}^N h(\alpha_n)\bigg\}^M,
\end{equation}
where $c_1(M)$ is a positive constant that depends only on $M$.  An even sharper bound follows from very recent work of 
Amoroso and Viada \cite{amoroso2012}.

In section 2--4 of this paper we establish a new formula for the volume of a zonoid.  The precise result is stated in Theorem \ref{thm3}.
As this is a formula for the volume of a certain convex symmetric set in Euclidean space, we obtain a further result, stated
in section 5 as Theorem \ref{thm4}, from an application of Minkowski's second theorem.  Algebraic numbers appear again in section 6.  There we 
combine Theorem \ref{thm4} with the results of \cite{all2009} to complete the proof of Theorem \ref{thm1}.  Finally, in section 7 we combine
Theorem \ref{thm1} and the basis form of Siegel's Lemma obtained in \cite{bombieri1983}.  This leads to a proof of Theorem \ref{thm2}.

\section{Zonoids}  

Let $\delta:\R^N\rightarrow [0, \infty)$ be a norm, and 
\begin{equation*}\label{zon0}
B = \big\{\bxi \in \R^N: \delta(\bxi) \le 1\big\}
\end{equation*}
the associated unit ball in the $N$-dimensional, real Banach space $(\R^N, \delta)$.  Then $B$
is a compact, convex, symmetric subset having a nonempty interior.  The dual norm
$\delta^*:\R^N\rightarrow [0, \infty)$ is given by
\begin{equation}\label{zon1}
\delta^*(\bbeta) = \sup\big\{\langle \bxi, \bbeta\rangle: \delta(\bxi) \le 1\big\},
\end{equation}  
where we write
\begin{equation*}\label{zon2}
\langle \bxi, \bbeta \rangle = \xi_1\eta_1 + \xi_2\eta_2 + \cdots + \xi_N\eta_N
\end{equation*}
for the usual inner product on vectors $\bxi$ and $\bbeta$ in $\R^N$.  We also write
\begin{equation}\label{zon3}
B^* = \big\{\bbeta \in \R^N: \delta^*(\bbeta) \le 1\big\}
\end{equation}
for the dual (or polar) unit ball in the dual Banach space $(\R^N, \delta^*)$.  Of course,
$B^*$ is also a compact, convex, symmetric subset having a nonempty interior, $\delta$ is the norm
that is dual to $\delta^*$, and $B$ is the unit ball that is dual to $B^*$.

Let $\Vol_N$ denote $N$-dimensional Lebesgue measure or $N$-dimensional volume defined on Borel measurable subsets
of $\R^N$.  An important problem in the theory of convex sets is to estimate the volume product
$\Vol_N(B) \Vol_N(B^*)$ by an expression that depends only on the dimension $N$.  An upper bound is
given by the Blaschke-Santal\'o inequality (see \cite{santalo1949})
\begin{equation}\label{zon4}
\Vol_N(B) \Vol_N(B^*) \le \kappa_N^2,
\end{equation}
where $\kappa_N$ is the volume of the $N$-dimensional unit ball.  The lower bound
\begin{equation}\label{zon5}
\frac{4^N}{N!} \le \Vol_N(B) \Vol_N(B^*)
\end{equation}
was conjectured by K. Mahler \cite{mahler1939a}, \cite{mahler1939b}, who proved the
inequality (\ref{zon5}) in the special case $N = 2$.  In general, the conjectured lower bound
(\ref{zon5}) remains an open problem.  However, (\ref{zon5}) has been proved by S. Reisner 
\cite[Theorem 2]{reisner1985}, in case either $B$ or $B^*$ is a zonoid.  Related results
are discussed in \cite{lopez1998}, \cite{reisner1986}, and \cite{reisner1988}. 

We briefly recall basic facts that we will need concerning zonoids.  A more extensive discussion
of zonoids is contained in the survey papers of Bolker \cite{bolker1969},
Schnieder and Weil \cite{schneider1983}, and in the monograph of Thompson \cite{thompson1996}.  Let
\begin{equation*}\label{zon10}
\Sigma_{N-1} = \big\{\bxi \in \R^N: |\bxi|_2 = 1\big\}
\end{equation*}
denote the surface of the Euclidean unit ball in $\R^N$.  Here we write $|\bxi|_2$ for the usual Euclidean
or $l^2$-norm on vectors in $\R^N$.  We also write
\begin{equation*}\label{zon11}
|\bxi|_1 = |\xi_1| + |\xi_2| + \cdots + |\xi_N|\quad\text{and}\quad |\bxi|_{\infty} = \max\{|\xi_1|, |\xi_2|, \dots , |\xi_N|\}
\end{equation*}
for the $l^1$ and $l^{\infty}$ norms, respectively, on vectors $\bxi$ in $\R^N$.  Then the dual unit ball $B^*$ is a {\it zonoid}
(see \cite[Theorem 1.2]{schneider1983}) if and only if there exists a finite, even measure $\rho$, defined 
on the $\sigma$-algebra of Borel subsets of $\Sigma_{N-1}$, such that
\begin{equation}\label{zon13}
\delta(\bxi) = \hh \int_{\Sigma_{N-1}} \bigl|\langle \bxi, \bbeta\rangle\bigr|\ \drho(\bbeta)
\end{equation}
for all vectors $\bxi$ in $\R^N$.  We say that $\rho$ is an {\it even} (or {\it symmetric}) measure if $\rho(E) = \rho(-E)$ for
all Borel subsets $E\subseteq \Sigma_{N-1}$.  

Let
\begin{equation*}\label{zon14}
U = (u_{mn}) = \bigl(\bu_1\ \bu_2\ \cdots\ \bu_N\bigr)
\end{equation*}
be an $M\times N$ real matrix with $\rank(U) = N$, where $\bu_1, \bu_2, \dots , \bu_N$ denote the columns of $U$.
Then let
\begin{equation*}\label{zon15}
U^t = V = \bigl(\bv_1\ \bv_2\ \cdots\ \bv_M\bigr)
\end{equation*}
denote the $N\times M$ transpose of $U$, where $\bv_1, \bv_2, \dots , \bv_M$ are the columns of $V$.  It follows that the map
\begin{equation}\label{zon16}
\bxi \mapsto \bigl|U\bxi\bigr|_1= \sum_{m=1}^M~\Bigl|\sum_{n=1}^N u_{mn} \xi_n\Bigr| = \sum_{m=1}^M \bigl|\langle \bxi, \bv_m\rangle\bigr|
\end{equation}
defines a norm on $\R^N$.  The unit ball associated to the norm (\ref{zon16}) is obviously
\begin{equation*}\label{zon17}
B_U = \big\{\bxi \in \R^N: \bigl|U\bxi|_1 \le 1\big\},
\end{equation*}
and it is not difficult to show that the dual unit ball is given by
\begin{equation}\label{zon18}
B_U^* = \big\{V\bw : \bw \in \R^M\ \text{and}\ |\bw|_{\infty} \le 1\big\}.
\end{equation}
Thus the dual unit ball $B_U^*$ is the linear image in $\R^N$ of the unit cube in $\R^M$, where $N \le M$, and therefore 
$B_U^*$ is an example of a {\it zonotope}.  It is easy to see that a zonotope of the form (\ref{zon18}) is an example of a 
zonoid.  This follows because the sum on the right of (\ref{zon16}) can be written as
\begin{equation}\label{zon19}
\bigl|U\bxi\bigr|_1 = \sum_{m=1}^M \bigl|\langle \bxi, \bv_m\rangle\bigr| 
	= \hh \int_{\Sigma_{N-1}} \bigl|\langle \bxi, \bbeta\rangle\bigr|\ \drho_V(\bbeta),
\end{equation}
where $\rho_V$ is the unique atomic measure on Borel subsets of $\Sigma_{N-1}$ that assigns point mass $|\bv_m|_2$
to each of the $2M$ points $\pm \bv_m |\bv_m|_2^{-1}$ for $m = 1, 2, \dots , M$.

We will make use of a basic formula for the $N$-dimensional volume of the zonotope (\ref{zon18}) that was established by
P.~McMullen \cite{McMullen1984}.  For each subset $I\subseteq \{1, 2, \dots , M\}$ of cardinality $|I| = N$, let $V_I$ denote the 
$N\times N$ submatrix of $V$ such that $n = 1, 2, \dots , N$ indexes rows and $m$ in $I$ indexes columns.  Then McMullen's formula is
\begin{equation}\label{zon20}
\Vol_N\bigl(B_U^*\bigr) = 2^N \sum_{|I| = N} \bigl|\det V_I\bigr|.
\end{equation}

\section{Zonoids obtained from Banach spaces}

Let $\X$ be a Banach space with norm given by $\|\ \|_{\X}$, and let $\X^*$ denote the dual Banach space of continuous 
linear functionals on $\X$.  If $\bx$ is a vector in $\X$
and $\bx^*$ is a vector in $\X^*$, we write $\langle \bx, \bx^*\rangle$ for the value of the continuous linear 
functional $\bx^*$ at the point $\bx$.  Then the dual norm on $\X^*$ is given by
\begin{equation*}\label{zon29}
\|\bx^*\|_{\X^*} = \sup\big\{\langle \bx, \bx^*\rangle : \|\bx\|_{\X} \le 1\big\}.
\end{equation*}
If $\M\subseteq \X$ is a closed linear subspace we write
\begin{equation}\label{zon30}
\M^{\perp} = \big\{\bx^* \in \X^*: \langle \bm, \bx^*\rangle = 0\ \text{for all}\ \bm \in \M\big\}
\end{equation}
for the subspace of all continuous linear functionals in $\X^*$ that vanish on $\M$.  Then $\M^{\perp} \subseteq \X^*$ is a 
closed linear subspace, and we recall that the quotient norm on the Banach space $\X^*/\M^{\perp}$ is defined by
\begin{equation}\label{zon31}
\bigl\|\bx^* + \M^{\perp}\bigr\|_{\X^*/\M^{\perp}} = \inf\big\{\|\bx^* + \bwy^*\|_{\X^*}: \bwy^* \in \M^{\perp}\big\}.
\end{equation}
By the Hahn-Banach theorem (see \cite[Theorem 3.3]{rudin1991}), each linear functional $\bm^*$ in $\M^*$ extends to a linear
functional $\bz^*$ in $\X^*$.  If $\bz_1^*$ and $\bz_2^*$ are both extensions of $\bm^*$, then $\bz_1^* - \bz_2^*$ clearly belongs to
the subspace $\M^{\perp}$.  It follows that the map 
\begin{equation*}\label{zon32}
\sigma:\M^*\rightarrow \X^*/\M^{\perp}
\end{equation*}
given by
\begin{equation}\label{zon33}
\sigma\bigl(\bm^*\bigr) = \bz^* + \M^{\perp}
\end{equation}
is well defined.  Moreover, the map (\ref{zon33}) is an isometric isomorphism (see \cite[Theorem 4.9]{rudin1991})
from $\M^*$ onto $\X^*/\M^{\perp}$.  In view of (\ref{zon31}), for each point $\bm^*$ in $\M^*$ the 
norm of $\bm^*$ is given by
\begin{align}\label{zon34}
\begin{split}
\|\bm^*\|_{\M^*} &= \sup\big\{\langle \bm, \bm^*\rangle : \|\bm\|_{\M} \le 1\big\}\\
                           &= \inf\big\{\|\bz^* + \bwy^*\|_{\X^*} : \bwy^* \in \M^{\perp}\big\},
\end{split}
\end{align}
where $\bz^*$ in $\X^*$ extends $\bm^*$.

Now let $T:\R^N\rightarrow \X$ be an injective linear transformation.  We define a norm $\delta:\R^N\rightarrow [0, \infty)$ by
\begin{equation*}\label{pre9}
\delta(\bxi) = \|T(\bxi)\|_{\X}.
\end{equation*}
Because $T$ is injective, it follows that the linear transformation $T$ is an isometry
from $(\R^N, \delta)$ into $(\X, \|\ \|_{\X})$.
As in (\ref{zon1}) the dual norm $\delta^*:\R^N \rightarrow [0, \infty)$ is determined by
\begin{equation*}\label{pre10}
\delta^*(\bbeta) = \sup\{\langle \bxi, \bbeta\rangle : \delta(\bxi) \le 1\}.
\end{equation*}
The unit ball in $(\R^N, \delta)$, and the dual unit ball in $(\R^N, \delta^*)$, are given by 
\begin{equation}\label{pre11}
B = \{\bxi\in R^N: \delta(\bxi) \le 1\},\quad\text{and}\quad B^* = \{\bbeta\in R^N: \delta^*(\bbeta) \le 1\}.
\end{equation}
We recall that the adjoint transformation $T^*:\X^*\rightarrow \R^N$ is the unique linear transformation such that
\begin{equation}\label{pre12}
\langle \bxi, T^*(\bx^*)\rangle = \langle T(\bxi), \bx^*\rangle
\end{equation}
for all $\bxi$ in $\R^N$ and all $\bx^*$ in $\X^*$.
For our purposes it is useful to have an alternative description of $\delta^*$ and $B^*$ in terms of the adjoint 
transformation $T^*$.

\begin{lemma}\label{lem1}  The dual norm $\delta^*:\R^N \rightarrow [0, \infty)$ is given by
\begin{equation}\label{pre13}
\delta^*(\bbeta) = \inf\{\|\bx^*\|_{\X^*} : \bx^*\in \X^*\ \text{and}\ T^*(\bx^*) = \bbeta\},
\end{equation}
and the dual unit ball $B^*$ is
\begin{equation}\label{pre14}
B^* = \{T^*(\bx^*): \bx^*\in \X^*\ \text{and}\ \|\bx^*\|_{\X^*} \le 1\}.
\end{equation}
\end{lemma}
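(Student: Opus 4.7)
The plan is to transfer the computation of $\delta^*$ and $B^*$ to the dual of the closed subspace $\M := T(\R^N) \subseteq \X$.  Since $T$ is injective, $\M$ is finite-dimensional and hence closed in $\X$, and $T:(\R^N,\delta)\to(\M,\|\,\cdot\,\|_{\X})$ is an isometric isomorphism.  I can then invoke the Hahn-Banach extension formula (\ref{zon34}) already recorded in the paper to convert norms on $\M^*$ into infima of norms of extensions in $\X^*$.

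First, for each $\bbeta\in\R^N$ I would introduce the functional $\bm^*_{\bbeta}\in\M^*$ defined by $\bm^*_{\bbeta}(\bm) = \langle T^{-1}(\bm),\bbeta\rangle$.  Substituting $\bm = T(\bxi)$ in the definition of the $\M^*$-norm and using $\delta(\bxi) = \|T(\bxi)\|_{\X}$ shows directly that
\[
\|\bm^*_{\bbeta}\|_{\M^*} = \sup\bigl\{\langle \bxi,\bbeta\rangle : \delta(\bxi)\le 1\bigr\} = \delta^*(\bbeta).
\]
Next, the defining relation (\ref{pre12}) of the adjoint shows that the restriction of any $\bx^*\in\X^*$ to $\M$ is exactly $\bm^*_{T^*(\bx^*)}$; since the map $\bbeta\mapsto\bm^*_{\bbeta}$ is injective from $\R^N$ into $\M^*$, this yields the equivalence that $\bx^*\in\X^*$ extends $\bm^*_{\bbeta}$ if and only if $T^*(\bx^*) = \bbeta$.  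Substituting this equivalence into (\ref{zon34}) applied to $\bm^*_{\bbeta}$ gives (\ref{pre13}) at once.

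For (\ref{pre14}), the inclusion $\{T^*(\bx^*) : \|\bx^*\|_{\X^*}\le 1\}\subseteq B^*$ is immediate from (\ref{pre13}).  The reverse inclusion requires that the infimum in (\ref{pre13}) actually be attained when $\delta^*(\bbeta)\le 1$, which is precisely the statement of the Hahn-Banach extension theorem applied to $\bm^*_{\bbeta}\in\M^*$: there exists $\bx^*\in\X^*$ extending $\bm^*_{\bbeta}$ with $\|\bx^*\|_{\X^*} = \|\bm^*_{\bbeta}\|_{\M^*} = \delta^*(\bbeta) \le 1$, and by the equivalence above $T^*(\bx^*) = \bbeta$.

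No serious obstacle is expected; the argument is a direct consequence of the identification $\M^*\cong\X^*/\M^{\perp}$ already recalled in the paper, combined with the extension form of Hahn-Banach.  The only minor subtlety to keep clean is the implicit identification $(\R^N)^*\cong\R^N$ via the standard inner product, which is already the convention used in the definition (\ref{zon1}) of $\delta^*$.
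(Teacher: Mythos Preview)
Your proposal is correct and follows essentially the same approach as the paper: both set $\M = T(\R^N)$, identify $\delta^*(\bbeta)$ with the $\M^*$-norm of the corresponding functional on $\M$, and then invoke the duality $\M^* \cong \X^*/\M^{\perp}$ recorded in (\ref{zon34}) together with the observation that the fibers of $T^*$ are exactly the sets of extensions.  Your functional $\bm^*_{\bbeta}$ is simply the inverse image of $\bbeta$ under the paper's adjoint isometry $U^*:\M^*\to\R^N$, and your explicit use of Hahn--Banach to show the infimum in (\ref{pre13}) is attained is a welcome clarification of the step the paper calls ``immediate.''
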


\begin{proof}  Let $\M = T(\R^N)$ denote the range of $T$, so that $\M\subseteq \X$ is a closed
linear subspace of dimension $N$.  Obviously $T$ induces an isometric isomorphism $U:\R^N\rightarrow \M$ by
$U(\bxi) = T(\bxi)$.  Then the adjoint map $U^*:\M^*\rightarrow \R^N$ is an isometric isomorphism with
respect to the norm $\delta^*$ on $\R^N$.  That is, if $\bbeta$ is in $\R^N$ and $\bm^*$ is the unique point in $\M^*$
such that $U^*(\bm^*) = \bbeta$, then
\begin{align*}\label{pre15}
\begin{split}
\delta^*(\bbeta) &= \sup\big\{\langle \bxi, \bbeta\rangle: \delta(\bxi) \le 1\big\}\\
                 &= \sup\big\{\langle \bxi, U^*(\bm^*)\rangle: \|U(\bxi)\|_{\M} \le 1\big\}\\
                 &= \sup\big\{\langle U(\bxi), \bm^*\rangle: \|U(\bxi)\|_{\M} \le 1\big\}\\
                 &= \sup\big\{\langle \bm, \bm^*\rangle: \|\bm\|_{\M} \le 1\big\}.
\end{split}
\end{align*} 
If $\bz^*$ in $\X^*$ extends the linear functional $\bm^*$, then from (\ref{zon34}) we get
\begin{equation}\label{pre16}
\delta^*(\bbeta) = \inf\big\{\|\bz^* + \bwy^*\|_{\X^*} : \bwy^* \in \M^{\perp}\big\}.
\end{equation}

The linear transformation $T$ can be written as the composition $T = i\circ U$, where
\begin{equation*}\label{pre17}
\R^N \buildrel U \over \longrightarrow \M \buildrel i \over \longrightarrow \X.
\end{equation*}
It follows that the adjoint map $T^*$ is the composition $T^* = U^*\circ i^*$, where
\begin{equation*}\label{pre18}
\X^* \buildrel i^* \over \longrightarrow \M^* \buildrel U^* \over \longrightarrow \R^N.
\end{equation*}
As $T$ is injective, the adjoint transformation $T^*$ is surjective (see \cite[Theorem 2.1]{carothers}).
Hence there certainly exists a point $\bz^*$ in $\X^*$ such that $T^*(\bz^*) = \bbeta$.  Then writing
$\bbeta = U^*\circ i^*(\bz)$ and $i^*(\bz) = \bm^*$, we find that $\bm^*$ is the unique point in $\M^*$ such 
that $U^*(\bm^*) = \bbeta$.  Therefore the linear functional $\bz^*$ does extend $\bm^*$ to $\X^*$.

Finally, the closed subspace $\M^{\perp}$ is also the kernel of $T^*$ (see \cite[Theorem 4.12]{rudin1991}).
As $T^*(\bz^*) = \bbeta$ we find that
\begin{align}\label{pre19}
\begin{split}
\big\{\bz^* + \bwy^*: \bwy^* \in \M^{\perp}\big\} &= \big\{\bz^* + \bwy^*: T^*(\bwy^*) = \bo\big\}\\
						 &= \big\{\bx^*: T^*(\bx^*) = \bbeta\big\}.
\end{split}
\end{align}
The identity (\ref{pre13}) follows now by combining (\ref{pre16}) and (\ref{pre19}).  Then (\ref{pre14})
is immediate from (\ref{pre13}).
\end{proof}

It will be convenient at this point to write $(X, \A, \nu)$ for a measure space.  That is, $X$ is a nonempty set,
$\A$ is a $\sigma$-algebra of subsets of $X$, and $\nu$ is a measure defined on the subsets in $\A$.  To avoid degenerate situations,
we assume that $\A$ contains subsets of positive $\nu$-measure.  We will
apply Lemma \ref{lem1} to the real Banach space $L^1(X, \A, \nu)$
and its dual space $L^{\infty}(X, \A, \nu)$.  Let $F_1, F_2, \dots , F_N$ be a collection
of $\R$-linearly independent elements from $L^1(X, \A, \nu)$.  By abuse of language we refer to
$F_1, F_2, \dots , F_N$ as functions rather than as equivalence classes of functions equal $\nu$-almost everywhere.  We adopt
a similar abuse of language with respect to other Banach spaces of measurable functions.  

Using the functions $F_1, F_2, \dots , F_N$, we define an injective linear transformation $T:\R^N\rightarrow L^1(X, \A, \nu)$ by setting
\begin{equation}\label{pre25}
T(\bxi) = \sum_{n=1}^N \xi_n F_n(x).
\end{equation}
We find that the adjoint transformation $T^*:L^{\infty}(X, \A, \nu) \rightarrow \R^N$ is given by $T^*(g) = \bbeta$, where $g$ is a function
in $L^{\infty}(X, \A, \nu)$ and $\bbeta$ is the vector in $\R^N$ having co-ordinates $\eta_n$, for $n = 1, 2, \dots , N$, given by
\begin{equation}\label{pre26}
\eta_n =  \int_X F_n(x) g(x)\ \dnu(x).
\end{equation}
Alternatively, if we write
\begin{equation*}\label{pre27}
\langle f, g \rangle = \int_X f(x) g(x)\ \dnu(x),
\end{equation*}
then $T^*:L^{\infty}(X, \A, \nu)\rightarrow \R^N$ is the unique linear transformation such that
\begin{equation}\label{pre28}
\langle \bxi, T^*(g) \rangle = \langle T(\bxi), g\rangle
\end{equation}
for all vectors $\bxi$ in $\R^N$ and all functions $g(x)$ in $L^{\infty}(X, \A, \nu)$.  

As before we write
\begin{equation}\label{pre29}
\delta(\bxi) = \|T(\bxi)\|_1 = \int_X~\Bigl|\sum_{n=1}^N \xi_n F_n(x)\Bigr|\ \dnu(x)
\end{equation}
for the norm in $\R^N$ induced by $T$, and
\begin{equation}\label{pre30}
B = \big\{\bxi \in \R^N: \|T(\bxi)\|_1 \le 1\big\}
\end{equation}
for the corresponding unit ball.  From Lemma \ref{lem1} we conclude that the dual norm is given by
\begin{equation*}\label{pre31}
\delta^*(\bbeta) = \inf\big\{\|g\|_{\infty}: g\in L^{\infty}(X, \A, \nu)\ \text{and}\ T^*(g) = \bbeta\big\},
\end{equation*}
and the dual unit ball is
\begin{equation}\label{pre32}
B^* = \big\{T^*(g): g\in L^{\infty}(X, \A, \nu)\ \text{and}\ \|g\|_{\infty} \le 1\big\}.
\end{equation}
The following lemma shows that the dual unit ball $B^*$ defined by (\ref{pre32}) is a zonoid.

\begin{lemma}\label{lem2}  Let $T:\R^N\rightarrow L^1(X, \A, \nu)$ be the injective linear transformation defined 
by {\rm (\ref{pre25})}.  Then there exists a unique, finite, even measure $\rho$, defined on the $\sigma$-algebra of Borel 
subsets of $\Sigma_{N-1}$, such that
\begin{equation}\label{pre33}
\|T(\bxi)\|_1 = \hh \int_{\Sigma_{N-1}} \bigl|\langle \bxi, \bbeta\rangle\bigr|\ \drho(\bbeta)
\end{equation}
for all vectors $\bxi$ in $\R^N$.
\end{lemma}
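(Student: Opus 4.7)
The plan is to realize $\rho$ as the symmetrization of a pushforward measure from $X$ to $\Sigma_{N-1}$, and then to deduce uniqueness from the injectivity of the map $\rho\mapsto\int_{\Sigma_{N-1}}|\langle\bxi,\cdot\rangle|\,\drho$ on finite, even Borel measures.

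First I would bundle the given functions into a single map $F\colon X\to\R^N$ by $F(x)=(F_1(x),\dots,F_N(x))$. Since each $F_n$ lies in $L^1(X,\A,\nu)$ and $|F(x)|_2\le|F_1(x)|+\cdots+|F_N(x)|$, the scalar function $x\mapsto|F(x)|_2$ is in $L^1(X,\A,\nu)$. On the measurable set $X_0=\{x\in X:F(x)\ne\bo\}$ define $\widehat F(x)=F(x)/|F(x)|_2$, a Borel measurable map into $\Sigma_{N-1}$. Let $\mu$ be the finite Borel measure on $\Sigma_{N-1}$ obtained as the pushforward of $|F|_2\,\dnu$ restricted to $X_0$ under $\widehat F$; that is,
\begin{equation*}
\mu(E)=\int_{\widehat F^{-1}(E)}|F(x)|_2\,\dnu(x)\quad\text{for each Borel}\ E\subseteq\Sigma_{N-1}.
\end{equation*}

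Using the identity $|\langle\bxi,F(x)\rangle|=|F(x)|_2\,|\langle\bxi,\widehat F(x)\rangle|$ on $X_0$ together with the standard change-of-variables formula for pushforward measures, I would obtain
\begin{equation*}
\|T(\bxi)\|_1=\int_{X_0}|F(x)|_2\bigl|\langle\bxi,\widehat F(x)\rangle\bigr|\,\dnu(x)=\int_{\Sigma_{N-1}}\bigl|\langle\bxi,\bbeta\rangle\bigr|\,\dmu(\bbeta).
\end{equation*}
The measure $\mu$ need not be even, so I would set $\rho(E)=\mu(E)+\mu(-E)$; the resulting measure is finite and even, and since $|\langle\bxi,-\bbeta\rangle|=|\langle\bxi,\bbeta\rangle|$ its integral against $|\langle\bxi,\cdot\rangle|$ is twice that of $\mu$, producing the factor $\hh$ required in (\ref{pre33}).

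Uniqueness is where the substance of the lemma sits. I would invoke the classical fact that the cosine-type transform $\rho\mapsto\int_{\Sigma_{N-1}}|\langle\bxi,\cdot\rangle|\,\drho$ is injective on the cone of finite, even Borel measures on $\Sigma_{N-1}$; this is the standard uniqueness statement for the generating measure of a zonoid, recorded in the surveys of Bolker and of Schneider and Weil cited in Section 2. The main obstacle I anticipate is not existence, which is essentially a formal pushforward construction, but citing uniqueness in a form directly applicable to the present setting; fortunately, once the measure has been transported to the sphere the classical statement applies verbatim.
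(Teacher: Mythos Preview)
Your proposal is correct and follows essentially the same route as the paper: the paper defines the same normalization map $\psi(x)=\vphi(x)^{-1}(F_n(x))$ on the set where $\vphi(x)=|F(x)|_2>0$, constructs the measure from the linear functional $f\mapsto 2\int \vphi(x)f(\psi(x))\,\dnu(x)$ via the Riesz representation theorem (which is just your pushforward of $|F|_2\,\dnu$ phrased differently), takes the even part to obtain $\rho$, and cites \cite[Theorem 1.4]{schneider1983} for uniqueness. The only cosmetic difference is that you invoke the pushforward/change-of-variables formula directly rather than passing through Riesz.
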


\begin{proof}  Let $\vphi(x)$ in $L^1(X, \A, \nu)$ be defined by
\begin{equation*}\label{pre34}
\vphi(x) = \big\{|F_1(x)|^2 + |F_2(x)|^2 + \cdots + |F_N(x)|^2\big\}^{\h}.
\end{equation*}
Then let $A$ in $\A$ be the measurable subset on which $\vphi(x)$ is positive.  As the functions
$F_1(x), F_2(x), \dots , F_N(x)$ are
$\R$-linearly independent elements of $L^1(X, \A, \nu)$, it is obvious that $\nu(A)$ is not zero.  Define a measurable function 
$\psi:A\rightarrow \Sigma_{N-1}$ by
\begin{equation*}\label{pre35}
\psi(x) = \vphi(x)^{-1}\bigl(F_n(x)\bigr).
\end{equation*}
Let $C(\Sigma_{N-1})$ be the algebra of continuous real valued functions defined on $\Sigma_{N-1}$ 
equipped with the supremum norm.  We define a positive linear functional $\Phi:C(\Sigma_{N-1})\rightarrow \R$ by
\begin{equation}\label{pre36}
\Phi(f) = 2 \int_A \vphi(x) f\bigl(\psi(x)\bigr)\ \dnu(x).
\end{equation}
It is clear that
\begin{equation*}\label{pre37}
\bigl|\Phi(f)\bigr| \le K \|f\|_{\infty},\quad\text{where}\quad K = \int_A \vphi(x)\ \dnu(x),
\end{equation*}
and therefore $f\mapsto \Phi(f)$ is continuous.  It follows from the Riesz representation theorem (see \cite[Theorems 2.14 and 2.17]{rudin1987})
that there exists a finite measure $\trho$, defined on the $\sigma$-algebra of Borel subsets of $\Sigma_{N-1}$, such that 
\begin{equation}\label{pre38}
\Phi(f) = \int_{\Sigma_{N-1}} f(\bbeta)\ \dtrho(\bbeta)
\end{equation}
for each function $f$ in $C(\Sigma_{N-1})$.  In particular, if $\bxi$ is a vector in $\R^N$, then
\begin{equation}\label{pre39}
\bbeta \mapsto \bigl|\langle \bxi, \bbeta\rangle\bigr|
\end{equation}
defines a function in $C(\Sigma_{N-1})$.  By applying (\ref{pre36}) and (\ref{pre38}) with this choice of $f$, we arrive at the identity
\begin{align}\label{pre40}
\begin{split}
\hh \int_{\Sigma_{N-1}} \bigl|\langle \bxi, \bbeta\rangle\bigr|\ \dtrho(\bbeta) 
		&= \int_A \vphi(x) \bigl|\langle \bxi, \psi(x)\rangle\bigr|\ \dnu(x)\\
		&= \int_A~\Bigl|\sum_{n=1}^N \xi_n F_n(x)\Bigr|\ \dnu(x)\\
		&= \|T(\bxi)\|_1.
\end{split}
\end{align}
Now let $\rho$ denote the even part of $\trho$.  That is, $\rho$ is the measure defined on Borel subsets $E$ of $\Sigma_{N-1}$ by
\begin{equation*}\label{pre41}
2 \rho(E) = \trho(E) + \trho(-E). 
\end{equation*}
As each of the functions (\ref{pre39}) is clearly even, (\ref{pre40}) implies that
\begin{equation*}\label{pre42}
\|T(\bxi)\|_1 = \hh \int_{\Sigma_{N-1}} \bigl|\langle \bxi, \bbeta\rangle\bigr|\ \drho(\bbeta)
\end{equation*}
for all vectors $\bxi$ in $\R^N$.  This verifies the identity (\ref{pre33}).

Finally, the assertion that $\rho$ is the {\it unique} even measure that satisfies the identity (\ref{pre33}) follows from
\cite[Theorem 1.4]{schneider1983}.
\end{proof}

\section{The volume of a zonoid}

In this section our main result is a formula for the $N$-dimensional volume of the zonoid $B^*$ defined in (\ref{pre32}).  Formulas for the
volume of a zonoid in terms of the even measure $\rho$ that occurs in the representation (\ref{pre33}) are known and
discussed in \cite{schneider1983}.  Here we establish a different formula for the volume of $B^*$ which depends on the functions
$F_1, F_2, \dots , F_N$.  Our result generalizes the formula (\ref{zon20}) for the volume of a zonotope obtained by McMullen in 
\cite{McMullen1984}. 

Using $F_1, F_2, \dots , F_N$, we define a function
\begin{equation*}\label{pre50}
\Delta: X^N \rightarrow \R
\end{equation*}
by setting
\begin{equation}\label{pre51}
\Delta(\bx) = \Delta(F_1, F_2, \dots , F_N; \bx) = \det\bigl(F_l(x_n)\bigr),
\end{equation}
where $l = 1, 2, \dots , N$ indexes rows, $n = 1, 2, \dots , N$ indexes columns, and $\bx$ denotes a point in the product space $X^N$ with 
co-ordinates $x_1, x_2, \dots , x_N$.  Let $\A^N$ denote the product $\sigma$-algebra
of subsets of $X^N$, and let $\nu^N$ denote the corresponding product measure defined on the $\sigma$-algebra $\A^N$.  We expand the function
$\Delta(\bx)$ as a sum over the symmetric group $S_N$ so that
\begin{equation}\label{pre52}
\Delta(\bx) = \sum_{\pi\in S_N} \sgn(\pi) \prod_{l=1}^N F_l(x_{\pi(l)}).
\end{equation}
It follows from (\ref{pre52}) that $\bx\mapsto \Delta(\bx)$ is $\A^N$-measurable because it is expressed as a sum in which each 
term is obviously $\A^N$-measurable.  The function $\bx\mapsto \Delta(\bx)$ is also integrable on $X^N$ with respect to the product measure
$\nu^N$.  This is a consequence of the bound
\begin{align}\label{pre53}
\begin{split}
\int_{X^N} \bigl|\Delta(\bx)\bigr|\ \dnu^N(\bx) &= \int_{X^N} \Bigl| \sum_{\pi\in S_N} \sgn(\pi) \prod_{l=1}^N F_l(x_{\pi(l)})\Bigr|\ \dnu^N(\bx)\\
	&\le \sum_{\pi\in S_N} \int_{X^N} \prod_{l=1}^N \bigl|F_l(x_{\pi(l)})\bigr|\ \dnu^N(\bx)\\
	&= N! \prod_{l=1}^N \|F_l\|_1.
\end{split}
\end{align}
We note that there is equality in the inequality (\ref{pre53}) whenever the integrable functions $F_1, F_2, \dots , F_N$ have disjoint support.

\begin{theorem}\label{thm3}  The volume of the zonoid $B^*$ defined by {\rm (\ref{pre32})} is given by
\begin{equation}\label{pre54}
\Vol_N(B^*) = \frac{2^N}{N!} \int_{X^N} \bigl|\Delta(F_1, F_2, \dots , F_N; \bx)\bigr|\ \dnu^N(\bx).
\end{equation}
\end{theorem}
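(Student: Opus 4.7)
\medskip

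\textbf{Proof plan.} The plan is to reduce Theorem \ref{thm3} to McMullen's formula (\ref{zon20}) by a density argument, approximating the functions $F_1, F_2, \dots, F_N$ by simple functions on a common finite measurable partition.

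First I would verify (\ref{pre54}) in the special case where each $F_l$ is simple, i.e. $F_l(x) = \sum_{m=1}^M a_{ml}\,\chi_{E_m}(x)$ for pairwise disjoint measurable sets $E_1, \dots, E_M$ of finite positive measure. Then
\begin{equation*}
\|T(\bxi)\|_1 = \sum_{m=1}^M \nu(E_m)\,\Bigl|\sum_{n=1}^N a_{mn}\xi_n\Bigr| = |U\bxi|_1,
\end{equation*}
where $U$ is the $M\times N$ matrix with entries $u_{mn} = \nu(E_m)\,a_{mn}$. This places us exactly in the zonotope setting of (\ref{zon16})--(\ref{zon20}), so McMullen's formula gives $\Vol_N(B^*) = 2^N \sum_{|I|=N}|\det V_I|$, and factoring the measure weights out of each column yields
\begin{equation*}
|\det V_I| = \Bigl(\prod_{m\in I}\nu(E_m)\Bigr)\bigl|\det(a_{ml})_{l,\,m\in I}\bigr|.
\end{equation*}

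Next I would compute the right hand side of (\ref{pre54}) for the same simple functions. The integrand $\Delta(\bx)$ is locally constant on the product cells: if $x_n\in E_{m_n}$ for each $n$, then $\Delta(\bx) = \det(a_{m_n,l})_{l,n}$, which vanishes whenever two of the indices $m_n$ coincide and otherwise has absolute value $|\det(a_{ml})_{l,\,m\in I}|$ on each of the $N!$ orderings of a fixed $N$-subset $I$. Summing over orderings,
\begin{equation*}
\int_{X^N}|\Delta(\bx)|\,\dnu^N(\bx) = N!\sum_{|I|=N}\Bigl(\prod_{m\in I}\nu(E_m)\Bigr)\bigl|\det(a_{ml})_{l,\,m\in I}\bigr|,
\end{equation*}
and multiplying by $2^N/N!$ recovers McMullen's expression. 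This settles the simple function case.

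Finally I would pass to the limit. Choose simple functions $F_n^{(k)}$ (each a finite linear combination of indicators of disjoint sets of finite measure) converging to $F_n$ in $L^1(X,\A,\nu)$ for every $n$; for $k$ large enough, linear independence is preserved, and Step 2 applies to $F_1^{(k)},\dots,F_N^{(k)}$. The induced norms satisfy $|\delta_k(\bxi) - \delta(\bxi)| \le \sum_n |\xi_n|\,\|F_n^{(k)} - F_n\|_1$, so $\delta_k\to\delta$ uniformly on bounded sets; this forces uniform convergence of the dual norms on bounded sets, Hausdorff convergence $B_k^*\to B^*$, and hence $\Vol_N(B_k^*)\to\Vol_N(B^*)$. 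For the integrals, multilinearity of the determinant expresses $\Delta(F_1^{(k)},\dots,F_N^{(k)};\bx) - \Delta(F_1,\dots,F_N;\bx)$ as a telescoping sum whose $L^1(X^N,\nu^N)$ norm is bounded, via Fubini as in (\ref{pre53}), by a sum of terms of the form $\|F_l^{(k)}-F_l\|_1\prod_{l'\neq l}\|F_{l'}^{(\ast)}\|_1\to 0$. Thus both sides of (\ref{pre54}) converge to the desired limits.

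The main obstacle I anticipate is the bookkeeping in Steps 1--2: matching McMullen's indexing of $V_I$ (where rows are indexed by the coordinate $n$ and columns by $m\in I$) against the $N!$ cell orderings that arise on the integral side requires careful tracking of transposes and column permutations. The approximation step itself is routine once one fixes a suitable dense class of simple functions and verifies that dual unit balls depend continuously (in volume) on the underlying norm.
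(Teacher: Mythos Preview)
Your proposal is correct and follows essentially the same architecture as the paper: reduce to simple functions via McMullen's formula, then pass to the limit. The simple-function step in your outline matches the paper's computation (the paper phrases the cell-by-cell evaluation of $\int_{X^N}|\Delta|\,\dnu^N$ via the Cauchy--Binet identity, but the content is the same as your direct count over orderings of an $N$-subset $I$).

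The one genuine methodological difference is in the limiting argument. You invoke continuity: $\delta_k\to\delta$ uniformly on bounded sets forces $\delta_k^*\to\delta^*$ uniformly on bounded sets, hence Hausdorff convergence $B_k^*\to B^*$ and thus $\Vol_N(B_k^*)\to\Vol_N(B^*)$. The paper instead builds an explicit sandwich (Lemma~\ref{lem4}): given $\epsilon>0$ it produces simple functions $s_l,t_l$ with $\|\sum\xi_l s_l\|_1\le\|\sum\xi_l F_l\|_1\le\|\sum\xi_l t_l\|_1$, so that $B_s^*\subseteq B^*\subseteq B_t^*$ and the volume of $B^*$ is trapped between two quantities already known by the simple-function case. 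Your route is the more ``generic'' functional-analytic one and requires citing (or proving) that volume is continuous under Hausdorff convergence of convex bodies; the paper's sandwiching is more self-contained and yields the volume comparison by monotonicity alone, at the cost of the extra construction in Lemma~\ref{lem4}. Either is fine, and the telescoping estimate you describe for the $\Delta$-integrals is exactly the paper's Lemma~\ref{lem5}.
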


Our proof of Theorem \ref{thm3} will require three lemmas.  We recall that a {\it simple} function $F:X\rightarrow \R$ is $\A$-measurable
and takes only finitely many distinct values.
 
\begin{lemma}\label{lem3}  Suppose that $F_1, F_2, \dots , F_N$ are $\R$-linearly independent simple functions in $L^1(X, \A, \nu)$.
Then there exists an $N\times M$ real matrix 
\begin{equation}\label{pre74}
A = \bigl(\ba_1\ \ba_2\ \cdots\ \ba_M\bigr) = (a_{lm}),
\end{equation}
and a partition 
\begin{equation*}\label{pre75}
X = E_0\cup E_1\cup E_2\cup \cdots \cup E_M
\end{equation*}
of $X$ into disjoint $\A$-measurable subsets, such that
\begin{itemize}
\item[(i)] the matrix $A$ has rank $N$, and no column of $A$ is identically zero,
\item[(ii)] for each integer $l = 1, 2, \dots , N$ the identity
\begin{equation}\label{pre76}
F_l(x) = \sum_{m=1}^M a_{lm} \chi_{E_m}(x)
\end{equation}
holds for $\nu$-almost all points $x$ in $X$, where $\chi_{E_m}(x)$ denotes the characteristic function of the subset $E_m$,
\item[(iii)] for each integer $m = 1, 2, \dots , M$ we have $0 < \nu(E_m) < \infty$.
\end{itemize}
\end{lemma}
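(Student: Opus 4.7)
The plan is to construct the partition directly from the joint level sets of $F_1,\dots,F_N$, then verify each item by a short bookkeeping argument, the only nontrivial point being the rank assertion in (i).

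First I would observe that since each $F_l$ is simple, so is the vector-valued map $\Psi(x) = (F_1(x),\dots,F_N(x))$: it takes only finitely many distinct values in $\R^N$. Let $\bv_0,\bv_1,\dots,\bv_K$ be these values, with $\bv_0 = \bo$ (if $\bo$ does not occur as a value, just insert it). For each $k$ set $E_k' = \Psi^{-1}(\bv_k)$; these form a finite $\A$-measurable partition of $X$. Discard the indices $k \ge 1$ with $\nu(E_k') = 0$ by lumping those $E_k'$ into $E_0'$ (this modification only changes each $F_l$ on a $\nu$-null set). Relabel the remaining indices as $m = 1,2,\dots,M$, so that $E_m$ has strictly positive $\nu$-measure and $\Psi \equiv \bv_m$ on $E_m$; put $E_0$ for the union of the zero-value piece and the discarded null pieces.

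Now define $a_{lm}$ to be the $l$-th coordinate of $\bv_m$ and let $\ba_m = \bv_m$, so that
\begin{equation*}
F_l(x) = \sum_{m=1}^M a_{lm} \chi_{E_m}(x)
\end{equation*}
holds $\nu$-almost everywhere, which is (ii). No column $\ba_m$ is zero for $m \ge 1$ because we deleted $\bv_0 = \bo$ from that list; this also means for each $m \ge 1$ there is some $l$ with $a_{lm} \ne 0$, and then
\begin{equation*}
|a_{lm}|\,\nu(E_m) \le \int_{E_m} |F_l|\,\dnu \le \|F_l\|_1 < \infty,
\end{equation*}
which together with $\nu(E_m) > 0$ gives (iii).

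For the rank claim in (i), suppose some $\bxi \in \R^N$ satisfies $\bxi^{t} A = \bo$, i.e.\ $\sum_{l=1}^N \xi_l a_{lm} = 0$ for every $m = 1,\dots,M$. Using (ii) this means $\sum_{l=1}^N \xi_l F_l(x) = 0$ on $\bigcup_{m=1}^M E_m$, and the same sum clearly vanishes on $E_0$ as well (where every $F_l$ is $0$ off a null set). Hence $\sum_l \xi_l F_l = 0$ in $L^1(X,\A,\nu)$, and $\R$-linear independence of $F_1,\dots,F_N$ forces $\bxi = \bo$. Thus $A$ has rank $N$, completing (i).

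The only step that requires any care is the passage from the raw joint level sets to a partition meeting all three bullets simultaneously: one must make sure that the removal of null-measure pieces does not destroy (ii), which is why I absorb those pieces into $E_0$ rather than discarding them outright. Everything else is a direct translation of linear independence of simple functions into linear independence of their value vectors, and this is the step I expect to be the main (though still mild) point to handle carefully.
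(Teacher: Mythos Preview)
Your proof is correct and follows essentially the same route as the paper: both construct the partition from the joint level sets of the vector-valued map $x\mapsto(F_1(x),\dots,F_N(x))$, absorb the null-measure pieces, and deduce the rank assertion from the $\R$-linear independence of the $F_l$. The only cosmetic difference is that you explicitly merge the null pieces into $E_0$, whereas the paper simply drops them from the sum; the content is identical.
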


\begin{proof}  Because each of the functions $F_1, F_2, \dots , F_N$ is a simple function, the map
\begin{equation}\label{pre80}
x \mapsto \left(\begin{matrix} F_1(x) \cr 
                                                      F_2(x) \cr
                                                      \cdots  \cr
                                                      F_N(x) \cr\end{matrix}\right) 
\end{equation}
takes finitely many values in $\R^N$.  Therefore we may partition $X$ into finitely many disjoint, $\A$-measurable subsets
$E_0, E_1, E_2, \dots , E_M$ such that (\ref{pre80}) is constant on the subset $E_m$ for each $m = 0, 1, 2, \dots , M$.   Clearly we
may assume that the map (\ref{pre80}) takes the value $\bo$ on the subset $E_0$, where $E_0$ may be the empty set. 
Then for $m = 1, 2, \dots , M$ we select a nonzero (column) vector $\ba_m$ in $\R^N$ such that 
\begin{equation*}\label{pre81}
\left(\begin{matrix} F_1(x) \cr 
                                                      F_2(x) \cr
                                                      \cdots  \cr
                                                      F_N(x) \cr\end{matrix}\right) = \ba_m\quad\text{whenever}\quad x \in E_m.
\end{equation*}
We write (\ref{pre74}) for the corresponding real $N\times M$ matrix, where $\ba_1, \ba_2, \dots , \ba_M$ denote the 
columns of the matrix $A$.  If we also write $\chi_{E_m}(x)$ for the characteristic function of the $\A$-measurable subset 
$E_m$, then each simple function $F_l(x)$ can be represented as
\begin{equation}\label{pre83}
F_l(x) = \sum_{m=1}^M a_{lm} \chi_{E_m}(x).
\end{equation}  
If one or more of the subsets $E_m$ occurring in the sum (\ref{pre83}) has $\nu$-measure zero, then we can remove that
term from the sum.  This changes each of the functions $F_l$ on a set of $\nu$-measure zero, and so does not change the 
element that $F_l$ represents in the Banach space $L^1(X, \A, \nu)$.  Thus we may assume without loss of generality that
$0 < \nu(E_m)$ for each $m = 1, 2, \dots , M$.  For each $m = 1, 2, \dots , M$ there exists an integer $l$ so that $a_{lm} \not= 0$.
As the corresponding simple function $F_l(x)$ is $\nu$-integrable on $X$, we must also have $\nu(E_m) < \infty$.

By hypothesis the simple functions $F_1, F_2, \dots , F_N$ are $\R$-linearly independent.  It follows that 
if $\bxi \not=\bo$ is in $\R^N$, then
\begin{equation*}\label{pre84}
\sum_{l=1}^N \xi_l F_l(x) = \sum_{m=1}^M \Big\{\sum_{l=1}^N \xi_l a_{lm}\Big\} \chi_{E_m}(x)
\end{equation*}
is not zero in $L^1(X, \A, \nu)$.  This shows that the rows of the matrix $A$ are $\R$-linearly independent, hence $A$ has rank $N$.
\end{proof}

\begin{lemma}\label{lem4}  Suppose that $F_1, F_2, \dots , F_N$ are $\R$-linearly independent functions in $L^1(X, \A, \nu)$ and 
$\epsilon > 0$.  Then there exist $\R$-linearly independent simple functions $s_1, s_2, \dots , s_N$ defined on $X$, and $\R$-linearly 
independent simple functions $t_1, t_2, \dots , t_N$ defined on $X$, such that
\begin{equation}\label{pre60}
\sum_{l=1}^N \|F_l - s_l\|_1 < \epsilon,\quad\text{and}\quad \sum_{l=1}^N \|F_l - t_l\|_1 < \epsilon,
\end{equation}
and the inequalities
\begin{equation}\label{pre61}
\Big\|\sum_{l=1}^N \xi_l s_l\Big\|_1 \le \Big\|\sum_{l=1}^N \xi_l F_l\Big\|_1\le \Big\|\sum_{l=1}^N \xi_l t_l\Big\|_1 
\end{equation}
hold for all vectors $\bxi$ in $\R^N$.
\end{lemma}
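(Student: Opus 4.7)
The plan is to construct $s_l$ and $t_l$ separately by a partition-plus-spike recipe. For the $s_l$ I will use conditional expectations, which are $L^1$-contractions and so force the left inequality in (\ref{pre61}) automatically. For the $t_l$ one cannot simply take a close $L^1$-approximation of $F_l$, because such an approximation can slightly undershoot the $L^1$ norm of some linear combination; instead I will start from a simple approximation and inflate it by small, pairwise disjoint spikes so that the right-hand inequality is forced uniformly in $\bxi$.

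Fix auxiliary parameters $0<\eta\ll\delta\ll\epsilon/N$. By density of simple functions in $L^1(X,\A,\nu)$, choose a finite $\sigma$-algebra $\F\subseteq\A$ generated by a partition $E_1,\dots,E_M$ of a set of finite measure, and set $s_l:=E[F_l\mid\F]$, so that $\sum_{l=1}^N\|F_l-s_l\|_1<\eta$. Since $\bxi\mapsto\sum\xi_l F_l$ is bounded below on the unit sphere of $\R^N$ and the perturbation $\bxi\mapsto\sum\xi_l(s_l-F_l)$ has operator norm at most $\eta$, the $s_l$'s are linearly independent once $\eta$ is small. Applying Jensen's inequality atom-by-atom,
\begin{equation*}
\Bigl\|\sum_l\xi_l s_l\Bigr\|_1=\sum_j\Bigl|\int_{E_j}\sum_l\xi_l F_l\,\dnu\Bigr|\le\sum_j\int_{E_j}\Bigl|\sum_l\xi_l F_l\Bigr|\,\dnu=\Bigl\|\sum_l\xi_l F_l\Bigr\|_1,
\end{equation*}
which is the left inequality in (\ref{pre61}).

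For the $t_l$, refine $\F$ so that each atom has small measure, and inside $N$ distinct atoms pick pairwise disjoint measurable subsets $D_1,\dots,D_N$, each a proper subset of its containing atom; arrange $\sum_l\nu(D_l)$ so small compared to $\delta/B$ that the loss term below is negligible, where $B=N\max_l\|s_l\|_\infty$ depends only on the already fixed $s_l$. Now set $t_l=s_l+c_l\chi_{D_l}$ with $c_l=\delta/\nu(D_l)$. Because each $s_i$ is constant on $D_l$, with value of modulus at most $B|\bxi|_\infty$ when summed against $\bxi$, the reverse triangle inequality on each $D_l$ gives
\begin{equation*}
\Bigl\|\sum_l\xi_l t_l\Bigr\|_1\ge\Bigl\|\sum_l\xi_l s_l\Bigr\|_1-2B|\bxi|_\infty\sum_l\nu(D_l)+\delta\sum_l|\xi_l|.
\end{equation*}
Combining this with the companion bound $\|\sum\xi_l F_l\|_1\le\|\sum\xi_l s_l\|_1+\eta|\bxi|_\infty$ and using $\sum_l|\xi_l|\ge|\bxi|_\infty$, the choice $\delta>\eta+2B\sum_l\nu(D_l)$ yields $\|\sum\xi_l F_l\|_1\le\|\sum\xi_l t_l\|_1$ uniformly in $\bxi$. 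Linear independence of the $t_l$'s follows since every atom of $\F$ meets $X\setminus\bigcup D_l$ in positive measure (the $D_l$'s are proper subsets of at most $N$ atoms), so a relation $\sum\xi_l t_l\equiv 0$ restricts to $\sum\xi_l s_l\equiv 0$ on each atom and thus forces $\bxi=\bo$. The $L^1$ closeness bounds in (\ref{pre60}) follow from $\|t_l-F_l\|_1\le\|s_l-F_l\|_1+\delta$ and the smallness of $\eta,\delta$.

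The real obstacle is the right inequality in (\ref{pre61}): the domination has to be simultaneous in $\bxi$, not up to an error. The spike construction is engineered precisely to overcome this, since the $N$ disjoint indicators $\chi_{D_l}$ are mutually orthogonal in the $L^1$ sense and contribute $\delta\sum|\xi_l|\ge\delta|\bxi|_\infty$ to $\|\sum\xi_l t_l\|_1$, which is of exactly the scale of the worst-case approximation error $\eta|\bxi|_\infty$. A secondary technical point is to pick the $D_l$'s inside atoms of $\F$ (and to refine $\F$ so that this is possible) so that $s_i$ is a constant on each $D_l$; this keeps the loss term linear in $\sum\nu(D_l)$ and makes both the norm comparison and the linear-independence argument clean.
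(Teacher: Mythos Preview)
Your construction of the $s_l$ via conditional expectation onto a finite sub-$\sigma$-algebra is correct and rather neat: the $L^1$-contraction property of $E[\,\cdot\mid\F]$ delivers the left inequality in (\ref{pre61}) automatically, which is more conceptual than the paper's treatment. The paper instead takes a single simple approximation $u_l$ of $F_l$ with $\sum_l\|F_l-u_l\|_1<\eta$ and then sets $s_l=(1-\theta)u_l$ and $t_l=(1+\theta)u_l$ for a small scalar $\theta=\eta/(2C_2)$; both inequalities in (\ref{pre61}) then fall out of the single estimate $\bigl|\,\|\sum\xi_l F_l\|_1-\|\sum\xi_l u_l\|_1\,\bigr|\le\eta|\bxi|_\infty\le\theta\|\sum\xi_l u_l\|_1$.

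There is, however, a genuine gap in your spike construction of the $t_l$. You require $N$ pairwise disjoint sets $D_l$, each contained in an atom of $\F$, with $\sum_l\nu(D_l)<(\delta-\eta)/(2B)$, where $\delta\ll\epsilon/N$ is fixed in advance and $B=N\max_l\|s_l\|_\infty$ is determined by the already-chosen $s_l$. The lemma is stated for an arbitrary measure space $(X,\A,\nu)$, and such small sets need not exist: take $X=\N$ with counting measure, where every nonempty set has measure at least $1$, and choose $F_l\in L^1$ with $\|F_l\|_\infty$ of order $1$; then $B$ is bounded below independently of $\F$, and for small $\epsilon$ the required inequality on $\sum\nu(D_l)$ is impossible. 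Your sentence ``refine $\F$ so that each atom has small measure'' is precisely where this breaks, and refining $\F$ also changes the $s_l$ and hence $B$, so the parameter dependencies become circular. For non-atomic $\nu$ your idea can be made to work once the order of choices is straightened out, but in general you must either handle the purely atomic part separately or replace the spike construction by something structure-free. The paper's multiplicative device $t_l=(1+\theta)u_l$ accomplishes this uniformly, with no hypothesis on $\nu$ beyond what is stated.
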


\begin{proof}  Let $C_1$ be a positive constant such that $\|F_l\|_1 \le C_1$ for each $l = 1, 2, \dots , N$.
Because the functions $F_1, F_2, \dots , F_N$ are $\R$-linearly independent, there exists a positive constant $3C_2$
such that
\begin{equation*}\label{pre62}
3C_2 |\bxi|_{\infty} \le \Big\|\sum_{l=1}^N \xi_l F_l \Big\|_1
\end{equation*}
for all vectors $\bxi$ in $\R^N$.  Let $0 < \eta \le C_2$, and then let $u_1, u_2, \dots , u_N$ be simple functions on $X$ such that
\begin{equation}\label{pre63}
\sum_{l=1}^N \|F_l - u_l\|_1 < \eta.
\end{equation}
It follows that
\begin{align*}\label{pre64}
\begin{split}
3C_2 |\bxi|_{\infty} &\le \Big\|\sum_{l=1}^N \xi_l\bigl(F_l - u_l\bigr)\Big\|_1 + \Big\|\sum_{l=1}^N \xi_l u_l\Big\|_1\\
                   &\le \eta |\bxi|_{\infty} + \Big\|\sum_{l=1}^N \xi_l u_l\Big\|_1,
\end{split}
\end{align*}
and therefore
\begin{equation}\label{pre65}
2C_2 |\bxi|_{\infty} \le \Big\|\sum_{l=1}^N \xi_l u_l\Big\|_1
\end{equation}
for all vectors $\bxi$ in $\R^N$.  In particular, (\ref{pre65}) implies that the simple functions $u_1, u_2, \dots , u_N$ are $\R$-linearly
independent.  We also get
\begin{align*}\label{pre66}
\begin{split}
\Big\|\sum_{l=1}^N \xi_l u_l\Big\|_1 &\le \Big\|\sum_{l=1}^N \xi_l (u_l - F_l)\Big\|_1 + \Big\|\sum_{l=1}^N \xi_l F_l\Big\|_1\\
                                     &\le \eta |\bxi|_{\infty} + \Big\|\sum_{l=1}^N \xi_l F_l\Big\|_1\\
                                     &\le \eta (2C_2)^{-1} \Big\|\sum_{l=1}^N \xi_l u_l\Big\|_1 + \Big\|\sum_{l=1}^N \xi_l F_l\Big\|_1,                                                                  
\end{split}
\end{align*}
and therefore
\begin{equation}\label{pre67}
\bigl(1 - \eta (2C_2)^{-1}\bigr) \Big\|\sum_{l=1}^N \xi_l u_l\Big\|_1 \le \Big\|\sum_{l=1}^N \xi_l F_l\Big\|_1
\end{equation}
for all vectors $\bxi$ in $\R^N$.  In a similar manner we find that
\begin{equation}\label{pre68}
\Big\|\sum_{l=1}^N \xi_l F_l\Big\|_1 \le \bigl(1 + \eta (2C_2)^{-1}\bigr) \Big\|\sum_{l=1}^N \xi_l u_l\Big\|_1
\end{equation}
for all vectors $\bxi$ in $\R^N$.  Thus we define
\begin{equation*}\label{pre69}
s_l(x) = \bigl(1 - \eta (2C_2)^{-1}\bigr) u_l(x),\quad\text{and}\quad t_l(x) = \bigl(1 + \eta (2C_2)^{-1}\bigr)u_l(x),
\end{equation*}
for $l = 1, 2, \dots , N$.  It follows that $s_1, s_2, \dots , s_N$ are linearly independent simple functions defined on $X$,
$t_1, t_2, \dots , t_N$ are linearly independent simple functions defined on $X$, and the inequality (\ref{pre61}) holds for all vectors $\bxi$
in $\R^N$.

To complete the proof we observe that
\begin{align}\label{pre70}
\begin{split}
\sum_{l=1}^N \|F_l - s_l\|_1 &\le \sum_{l=1}^N \|F_l - u_l\|_1 + \eta (2C_2)^{-1} \sum_{l=1}^N \|u_l\|_1\\
	                     &\le \eta + \eta (2C_2)^{-1} \Bigl(\eta + \sum_{l=1}^N \|F_l\|_1\Bigr)\\
	                     &\le \eta \bigl(1 + (2C_2)^{-1}\bigr)(C_2 + C_1N)\\
	                     &< \epsilon,
\end{split}
\end{align}
provided $\eta$ is sufficiently small.  An identical bound applies to the sum on the left of (\ref{pre70}) but with
$s_n$ replaced by $t_n$.  This verifies (\ref{pre60}).
\end{proof}

\begin{lemma}\label{lem5}  Suppose that $F_1, F_2, \dots , F_N$ and
$G_1, G_2, \dots , G_N$ belong to $L^1(X, \A, \nu)$.  Let $C_1$ be a positive constant such that
$\|F_n\|_1 \le C_1$ and $\|G_n\|_1 \le C_1$ hold for each $n = 1, 2, \dots , N$.
Then we have
\begin{align}\label{pre71}
\begin{split}
\int_{X^N} \bigl| \Delta(F_1&, F_2, \dots , F_N; \bx) - \Delta(G_1, G_2, \dots , G_N; \bx)\bigr|\ \dnu^N(\bx) \\
	&\le N!~C_1^{N-1} \sum_{l=1}^N \|F_l - G_l\|_1.
\end{split}
\end{align}
\end{lemma}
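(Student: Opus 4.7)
The plan is to exploit the multilinearity of the determinant in its rows via a standard telescoping argument. Since $\Delta(H_1, H_2, \dots, H_N; \bx)$ is linear in each $H_l$ (with the other arguments fixed), I can write the difference as the telescoping sum
\begin{equation*}
\Delta(F_1,\dots,F_N;\bx) - \Delta(G_1,\dots,G_N;\bx) = \sum_{l=1}^N \Delta(G_1,\dots,G_{l-1}, F_l - G_l, F_{l+1}, \dots, F_N; \bx),
\end{equation*}
where the $l$-th summand replaces $F_1,\dots,F_{l-1}$ by $G_1,\dots,G_{l-1}$ and $F_l$ by $F_l - G_l$. By the triangle inequality, the integral on the left of (\ref{pre71}) is at most the sum over $l$ of the integrals of the absolute values of these terms.

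Next I would bound each of these $N$ integrals individually by the same computation that yielded (\ref{pre53}). That is, for fixed $l$, expanding the determinant over $S_N$ and applying the triangle inequality followed by Fubini gives
\begin{equation*}
\int_{X^N} \bigl|\Delta(G_1,\dots,G_{l-1}, F_l - G_l, F_{l+1}, \dots, F_N; \bx)\bigr|\ \dnu^N(\bx) \le N! \|F_l - G_l\|_1 \prod_{i < l} \|G_i\|_1 \prod_{i > l} \|F_i\|_1.
\end{equation*}
The hypothesis that $\|F_i\|_1 \le C_1$ and $\|G_i\|_1 \le C_1$ for every $i$ then bounds the product of $N-1$ factors on the right by $C_1^{N-1}$, so each term contributes at most $N!\, C_1^{N-1} \|F_l - G_l\|_1$. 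Summing over $l = 1, 2, \dots, N$ yields (\ref{pre71}).

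There is no real obstacle here; the only point that requires a moment's care is the verification of the telescoping identity, which is just the $N$-fold multilinear identity $\prod a_i - \prod b_i = \sum_l b_1 \cdots b_{l-1}(a_l - b_l) a_{l+1} \cdots a_N$ applied row-by-row inside the determinant. Everything else is an application of estimates already used in deriving (\ref{pre53}).
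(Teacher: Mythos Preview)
Your proposal is correct and matches the paper's own proof essentially line for line: the same telescoping identity via multilinearity, followed by the same application of the bound (\ref{pre53}) to each summand and the hypothesis $\|F_i\|_1,\|G_i\|_1\le C_1$.
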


\begin{proof}  We use the identity
\begin{align*}\label{pre72}
\begin{split}
\Delta(F_1&, F_2, \dots , F_N; \bx) - \Delta(G_1, G_2, \dots , G_N; \bx)\\
	&= \sum_{l=1}^N \Delta(G_1, \dots , G_{l-1}, F_l, F_{l+1}, \dots , F_N; \bx)\\ 
	&\qquad\qquad - \sum_{l=1}^N \Delta(G_1, G_2, \dots , G_{l-1}, G_l, F_{l+1}, \dots , F_N; \bx)\\
	&= \sum_{l=1}^N \Delta(G_1, \dots , G_{l-1}, F_l - G_l, F_{l+1}, \dots , F_N; \bx)
\end{split}
\end{align*}
and the bound (\ref{pre53}).  In this way we obtain the inequality
\begin{equation*}\label{pre73}
\begin{split}
\int_{X^N} \bigl|&\Delta(F_1, F_2, \dots , F_N; \bx) - \Delta(G_1, G_2, \dots , G_N; \bx)\bigr|\ \dnu^N(\bx) \\
	&\le \sum_{l=1}^N \int_{X^N} \bigl| \Delta(G_1, \dots , G_{l-1}, F_l - G_l, F_{l+1}, \dots , F_N; \bx)\bigr|\ \dnu^N(\bx) \\
	&\le N!~\sum_{l=1}^N \big\{\|G_1\|_1 \cdots \|G_{l-1}\|_1 \|F_l - G_l\|_1 \|F_{l+1}\|_1 \cdots \|F_N\|_1\big\}\\
	&\le N!~C_1^{N-1} \sum_{l=1}^N \|F_l - G_l\|_1,
\end{split}
\end{equation*}
which proves the lemma.
\end{proof}

\begin{proof}[Proof of Theorem \ref{thm3}]
We assume to begin with that $F_1, F_2, \dots , F_N$ are $\R$-linearly independent simple functions in $L^1(X, \A, \nu)$.
Let $A = (a_{lm})$ denote the $N\times M$ matrix (\ref{pre74}), and let 
\begin{equation*}\label{pre85}
X = E_0\cup E_1\cup E_2\cup \cdots \cup E_M
\end{equation*}
be the partition of $X$ into disjoint $\A$-measurable subsets which satisfy the conclusions of Lemma \ref{lem3}.  Using these objects we define
a second $N\times M$ matrix 
\begin{equation}\label{pre86}
D = \bigl(a_{lm} \nu(E_m)\bigr).
\end{equation}
As $0 < \nu(E_m) < \infty$ for each $m = 1, 2, \dots, M$, we conclude that the matrix $D$ also has rank $N$.

Let $g$ belong to $L^{\infty}(X, \A, \nu)$ and satisfies $\|g\|_{\infty} \le 1$.  Then it follows from (\ref{pre32}) and (\ref{pre76}) that
\begin{align}\label{pre87}
\begin{split}
T^*(g) &= \bigl(\langle F_l, g\rangle\bigr)\\ 
	&= \Bigl(\sum_{m=1}^M a_{lm} \langle \chi_{E_m}, g\rangle \Bigr)\\
	&= \Bigl(\sum_{m=1}^M a_{lm} \nu(E_m) w_m\Bigr)\\
	&= D\bw
\end{split}
\end{align}
is the image of $g$ in $B^*$, where $\bw = (w_m)$ is the vector in $\R^M$ determined by
\begin{equation}\label{pre88}
w_m = \nu(E_m)^{-1} \int_{E_m} g(x)\ \dnu(x).
\end{equation}
Because $\|g\|_{\infty} \le 1$, it is obvious that $\|\bw\|_{\infty} \le 1$.  For an arbitrary vector $\bw$ in $\R^M$ with $\|\bw\|_{\infty} \le 1$ we 
can clearly select a function $g$ in $L^{\infty}(X, \A, \nu)$ such that $\|g\|_{\infty} \le 1$ and the co-ordinates of $\bw$ are given by (\ref{pre88}).
It follows that $B^*$ as defined by (\ref{pre32}), is also given by
\begin{equation}\label{pre89}
B^* = \big\{D\bw : \bw\in\R^M\ \text{and}\ \|\bw\|_{\infty} \le 1\big\}.
\end{equation}
Then from McMullen's formula (\ref{zon20}) we get
\begin{align}\label{pre90}
\begin{split}
\Vol_N\bigl(B^*\bigr) &= 2^N \sum_{|I| = N} \bigl|\det D_I\bigr|\\
	&= 2^N \sum_{|I| = N} \bigl|\det A_I\bigr|\prod_{m\in I} \nu(E_m).
\end{split}
\end{align}

For $\bx$ in $X^N$ let $Z(\bx)$ denote the $M\times N$ matrix
\begin{equation}\label{pre91}
Z(\bx) = \bigl(\chi_{E_m}(x_n)\bigr),
\end{equation}
where $m = 1, 2, \dots , M$ indexes rows and $n = 1, 2, \dots , N$ indexes columns.  Then (\ref{pre76}) leads to the matrix identity
\begin{equation*}\label{pre92}
\bigl(F_l(x_n)\bigr) = A Z(\bx).
\end{equation*}
Hence by the Cauchy-Binet identity we have
\begin{equation}\label{pre93}
\det\bigl(F_l(x_n)\bigr) = \sum_{|I| = N} \det A_I \det{_IZ(\bx)},
\end{equation}
where $A_I$ is the $N\times N$ submatrix of $A$ with columns indexed by $I$, and $_IZ(\bx)$ is the $N\times N$ submatrix of $Z(\bx)$
with rows indexed by $I$.  For each subset 
\begin{equation*}\label{pre94}
I = \{m_1 < m_2 < \cdots < m_N\} \subseteq \{1, 2, \dots , M\}
\end{equation*}
with cardinality $N$, let $\E(I)\subseteq X^N$ denote
the subset
\begin{equation}\label{pre95}
\E(I) = \bigcup_{\pi\in S_N(I)}\big\{E_{\pi(m_1)} \times E_{\pi(m_2)} \times \cdots \times E_{\pi(m_N)}\big\} \subseteq X^N,
\end{equation}
where $S_N(I)$ is symmetric group on the elements of the set $I$.  From (\ref{pre91}) and elementary properties of the determinant
we find that the map 
\begin{equation*}\label{pre100}
\bx\mapsto \det{_IZ(\bx)}
\end{equation*}
is supported on the subset $\E(I)$, and this map takes the value $\pm 1$ at each point of $\E(I)$.  We also note that
\begin{equation}\label{pre101}
\big\{\E(I): I\subseteq \{1, 2, \dots , M\}\ \text{and}\ |I| = N\big\}
\end{equation}
is a collection of {\it disjoint} $\A^N$-measurable subsets of $X^N$.  Then from (\ref{pre93}), (\ref{pre95}) and (\ref{pre101}), we obtain 
the identity
\begin{align}\label{pre102}
\begin{split}
\int_{X^N} \bigl|\Delta(F_1, F_2&, \dots , F_N; \bx)\bigr|\ \dnu^N(\bx)\\
	&= \sum_{|I| = N} \bigl|\det A_I\bigr| \int_{\E(I)} \bigl|\det{_IZ(\bx)}\bigr|\ \dnu^N(\bx)\\
	&= \sum_{|I| = N} \bigl|\det A_I\bigr| \nu^N\bigl(\E(I)\bigr)\\
	&= N! \sum_{|I| = N} \bigl|\det A_I\bigr| \prod_{m\in I} \nu(E_m).
\end{split}
\end{align}
By combining (\ref{pre90}) and (\ref{pre102}) we obtain the desired formula (\ref{pre54}), but under the assumption that
$F_1, F_2, \dots F_N$ are simple functions.

Now assume that $F_1, F_2, \dots , F_N$ are arbitrary $\R$-linearly independent functions in $L^1(X, \A, \nu)$ and let $0 < \epsilon \le 1$.
By Lemma \ref{lem4} there exist $\R$-linearly independent simple functions $s_1, s_2, \dots , s_N$ defined on $X$, and $\R$-linearly 
independent simple functions $t_1, t_2, \dots , t_N$ defined on $X$, which satisfy the conclusions (\ref{pre60}) and (\ref{pre61}) of
that result.   Write
\begin{equation*}\label{pre103}
C_1 = 2 \max\big\{\|F_1\|_1, \|F_2\|_1, \dots , \|F_N\|_1\big\},
\end{equation*}
so that
\begin{equation*}\label{pre104}
\max\big\{\|F_1\|_1, \dots , \|F_N\|_1, \|s_1\|_1, \dots , \|s_N\|_1, \|t_1\|_1, \dots , \|t_N\|_1\big\} \le C_1.
\end{equation*}
Let
\begin{equation*}\label{pre105}
B_s = \big\{\bxi\in \R^N: \big\| \xi_1s_1 + \xi_2s_2 + \cdots + \xi_Ns_N\big\|_1 \le 1\big\},
\end{equation*}
and
\begin{equation*}\label{pre106}
B_t = \big\{\bxi\in \R^N: \big\| \xi_1t_1 + \xi_2t_2 + \cdots + \xi_Nt_N\big\|_1 \le 1\big\},
\end{equation*}
be the corresponding unit balls.  From the inequality (\ref{pre61}) we conclude that
\begin{equation*}\label{pre107}
B_t \subseteq B \subseteq B_s,
\end{equation*}
and therefore 
\begin{equation}\label{pre108}
B_s^* \subseteq B^* \subseteq B_t^*.
\end{equation}
In particular, (\ref{pre108}) implies that
\begin{equation}\label{pre109}
\Vol_N\bigl(B_s^*\bigr) \le \Vol_N\bigl(B^*\bigr) \le \Vol_N\bigl(B_t^*\bigr).
\end{equation}
As $s_1, s_2, \dots , s_N$ are simple functions, by the case already considered we have
\begin{equation}\label{pre110}
\Vol\bigl(B_s^*\bigr) = \frac{2^N}{N!} \int_{X^N} \bigl|\Delta(s_1, s_2, \dots , s_N; \bx)\bigr|\ \dnu^N(\bx),
\end{equation}
and similarly
\begin{equation}\label{pre111}
\Vol\bigl(B_t^*\bigr) = \frac{2^N}{N!} \int_{X^N} \bigl|\Delta(t_1, t_2, \dots , t_N; \bx)\bigr|\ \dnu^N(\bx).
\end{equation}
Using (\ref{pre71}), (\ref{pre108}) and (\ref{pre110}), we obtain the inequality
\begin{align}\label{pre120}
\begin{split}
\frac{2^N}{N!} &\int_{X^N} \bigl|\Delta(F_1, F_2, \dots , F_N; \bx)\bigr|\ \dnu^N(\bx) - \Vol_N(B^*)\\
	&\le \frac{2^N}{N!}\Big\{\int_{X^N} \bigl|\Delta(F_1, F_2, \dots , F_N; \bx)\bigr|\ \dnu^N(\bx)\\ 
	&\qquad\qquad\qquad\qquad	- \int_{X^N} \bigl|\Delta(s_1, s_2, \dots , s_N; \bx)\bigr|\ \dnu^N(\bx)\Big\}\\
	&\le \frac{2^N}{N!} \int_{X^N} \bigl|\Delta(F_1, F_2, \dots , F_N; \bx) - \Delta(s_1, s_2, \dots , s_N; \bx)\bigr|\ \dnu^N(\bx)\\
	&\le (2 C_1)^N \sum_{n=1}^N \|F_n - s_n\|_1\\
	&\le (2 C_1)^N \epsilon.
\end{split}
\end{align}
Then the analogous inequality
\begin{equation}\label{pre121}
\Vol_N(B^*) - \frac{2^N}{N!} \int_{X^N} \bigl|\Delta(F_1, F_2, \dots , F_N; \bx)\bigr|\ \dnu^N(\bx) \le (2C_1)^N \epsilon
\end{equation}
can be established in a similar manner by using (\ref{pre111}).  The identity (\ref{pre54}) in the general case follows now by
combining (\ref{pre120}) and (\ref{pre121}).
\end{proof}

\section{Application of Minkowski's second theorem}

Again we suppose that $F_1, F_2, \dots , F_N$ are $\R$-linearly independent functions in the real Banach space $L^1(X, \A, \nu)$.
We write $B$ for the unit ball in $\R^N$ defined by (\ref{pre30}), and $B^*$ for the dual unit ball defined by (\ref{pre32}).  Lemma \ref{lem2}
shows that $B^*$ is a zonoid.  The inequality (\ref{zon5}) has been established by S.~Reisner \cite{reisner1985} in
this case, and therefore we have the inequality
\begin{equation}\label{st1}
\frac{4^N}{N!} \le \Vol_N(B) \Vol_N(B^*)
\end{equation}
for the volume product.  From (\ref{st1}) and the formula for the volume of a zonoid established in Theorem \ref{thm3}, we conclude that
\begin{equation}\label{st2}
2^N \le \Vol_N(B) \int_{X^N} \bigl|\Delta(F_1, F_2, \dots , F_N; \bx)\bigr|\ \dnu^N(\bx).
\end{equation}
Let $0 < \mu_1 \le \mu_2 \le \cdots \le \mu_N < \infty$ denote the successive minima of the convex symmetric set $B$ with
respect to the integer lattice $\Z^N$.  We recall (see \cite[Chapter VIII]{cassels1971} for further details) that $\mu_n$ is defined by
\begin{equation*}\label{st3}
\mu_n = \inf\big\{r > 0: rB\cap\Z^N\ \text{contains $n$ linearly independent points}\big\}.
\end{equation*}
If we write $T:\R^N\rightarrow L^1(X, \A, \nu)$ for the linear transformation defined by (\ref{pre25}), then it follows \cite[Lemma 1,
Chapter VIII]{cassels1971} that there exist linearly independent points $\bb_1, \bb_2, \dots , \bb_N$ in the integer lattice $\Z^N$
such that
\begin{equation}\label{st4}
\|T(\bb_n)\|_1 = \mu_n\quad\text{for each}\quad n = 1, 2, \dots , N.
\end{equation}
Minkowski's theorem on successive minima \cite[Theorem V, Chapter VIII]{cassels1971} asserts that the successive minima
satisfy the fundamental inequality
\begin{equation}\label{st5}
\mu_1 \mu_2 \cdots \mu_N \Vol_N(B) \le 2^N.
\end{equation}
Then the inequalities (\ref{st2}) and (\ref{st5}) imply that
\begin{equation}\label{st6}
\mu_1 \mu_2 \cdots \mu_N \le \int_{X^N} \bigl|\Delta(F_1, F_2, \dots , F_N; \bx)\bigr|\ \dnu^N(\bx).
\end{equation}
We reformulate these results in the following theorem.

\begin{theorem}\label{thm4}
Let $F_1, F_2, \dots , F_N$ be $\R$-linearly independent functions in $L^1(X, \A, \nu)$ and let $T:\R^N\rightarrow L^1(X, \A, \nu)$
be the linear transformation defined by
\begin{equation*}\label{st10}
T(\bxi) = \sum_{n=1}^N \xi_n F_n(x).
\end{equation*}
Then there exist linearly independent points $\bb_1, \bb_2, \dots , \bb_N$ in the integer lattice $\Z^N$ such that
\begin{equation}\label{st11}
\prod_{n=1}^N \|T(\bb_n)\|_1 \le \int_{X^N} \bigl|\Delta(F_1, F_2, \dots , F_N; \bx)\bigr|\ \dnu^N(\bx).
\end{equation}
Moreover, the $\Z$-module generated by $\bb_1, \bb_2, \dots, \bb_N$ has index at most $N!$ in $\Z^N$.
\end{theorem}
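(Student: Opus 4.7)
The plan is to assemble the pieces already developed in the preceding paragraphs and then supply the one remaining ingredient, namely the index bound. For the inequality (\ref{st11}) itself, I would invoke \cite[Lemma 1, Chapter VIII]{cassels1971} to produce linearly independent $\bb_1, \bb_2, \dots, \bb_N$ in $\Z^N$ with $\|T(\bb_n)\|_1 = \mu_n$, as recorded in (\ref{st4}); then (\ref{st11}) is exactly the inequality (\ref{st6}), which was obtained by chaining Reisner's volume-product lower bound (\ref{st1}) for the zonoid $B^*$ (justified by Lemma \ref{lem2}), the volume formula of Theorem \ref{thm3}, and the upper half of Minkowski's second theorem (\ref{st5}). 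Thus for the first assertion nothing beyond bookkeeping is required.

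For the index claim, set $\Lambda = \bb_1\Z + \bb_2\Z + \cdots + \bb_N\Z \subseteq \Z^N$. The key observation is that the successive minima of $B$ with respect to $\Lambda$ coincide with $\mu_1 \le \mu_2 \le \cdots \le \mu_N$: the inclusion $\Lambda \subseteq \Z^N$ can only make the successive minima larger, while the vectors $\bb_1, \bb_2, \dots, \bb_n$ themselves lie in $\Lambda \cap \mu_n B$ and are linearly independent, so they realize the equality on the other side. Apply the \emph{lower} half of Minkowski's second theorem to the lattice $\Lambda$ to get
\begin{equation*}
\frac{2^N}{N!}\, \det \Lambda \le \mu_1 \mu_2 \cdots \mu_N \Vol_N(B),
\end{equation*}
and combine this with the bound $\mu_1 \mu_2 \cdots \mu_N \Vol_N(B) \le 2^N$ from (\ref{st5}) (which used $\det \Z^N = 1$) to conclude that $\det \Lambda \le N!$. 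Since $\det \Lambda$ equals the index $[\Z^N : \Lambda]$, this is precisely the desired bound.

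If there is any obstacle in this plan, it is purely conceptual: one must recognize that Minkowski's second theorem needs to be invoked twice, first in its upper form on $\Z^N$ (to produce (\ref{st11}) via the intermediary $\mu_1 \cdots \mu_N \Vol_N(B)$) and then in its lower form on the sublattice $\Lambda$ (to translate the same intermediary into a bound on $\det \Lambda$). No additional machinery beyond what has been developed in Sections 2--4 is needed.
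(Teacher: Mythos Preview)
Your proposal is correct and follows essentially the same route as the paper: the inequality (\ref{st11}) is obtained exactly as you describe, and for the index bound the paper simply cites \cite[Corollary, Theorem V, Chapter VIII]{cassels1971}, whose proof is precisely the two-sided application of Minkowski's second theorem that you have written out.
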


\begin{proof}  The inequality (\ref{st11}) plainly follows from (\ref{st4}) and (\ref{st6}).  The last assertion of the theorem 
is a well known corollary of Minkowski's theorem (see \cite[Corollary, Theorem V, Chapter VIII]{cassels1971}).
\end{proof}

Suppose that $U = (u_{mn})$ is an $N\times N$ real matrix and $G_1, G_2, \dots , G_N$ are functions
in $L^1(X. \A, \nu)$ determined for each $m = 1, 2, \dots , N$ by
\begin{equation}\label{st12}
G_m(x) = \sum_{n = 1}^N u_{mn} F_n(x).
\end{equation}
Then at each point $\bx$ in $X^N$ the identity
\begin{equation}\label{st13}
\Delta(G_1, G_2, \dots , G_N; \bx) = (\det U) \Delta(F_1, F_2, \dots , F_N; \bx)
\end{equation}
follows immediately from the definition (\ref{pre51}).  If the matrix $U$ has integer entries and $\det U = \pm 1$,
then the $\Z$-module generated by the functions $G_1, G_2, \dots , G_N$ is obviously equal to the $\Z$-module generated
by $F_1, F_2, \dots , F_N$.  In this case it follows from (\ref{st13}) that the value of the integral
\begin{equation}\label{st14}
\int_{X^N} \bigl|\Delta(F_1, F_2, \dots , F_N; \bx)\bigr|\ \dnu^N(\bx)
\end{equation}  
depends only on the $\Z$-module in $L^1(X, \A, \nu)$ generated by $F_1, F_2, \dots , F_N$, and does not depend
on the choice of generators.

By a {\it lattice} in $L^1(X, \A, \nu)$ we understand a discrete $\Z$-module $\eL$ in $L^1(X, \A, \nu)$ that is
generated by a finite collection of $\R$-linearly independent functions.  If $\eL$ is a lattice and has rank $N$, and if
$F_1, F_2, \dots , F_N$ is a collection of $\R$-linearly independent functions in $L^1(X, \A, \nu)$ that generate $\eL$, then we define
\begin{equation*}\label{st15}
I(\eL) = \int_{X^N} \bigl|\Delta(F_1, F_2, \dots , F_N; \bx)\bigr|\ \dnu^N(\bx).
\end{equation*}
By our previous remarks the value of the integral $I(\eL)$ is well defined because it does not depend on the choice 
of generators.      

\begin{corollary}\label{cor3}
Let $\eL$ be a lattice in $L^1(X, \A, \nu)$ of rank $N$.  Then there exist linearly independent functions $H_1, H_2, \dots , H_N$
in $\eL$ such that
\begin{equation}\label{st16}
\prod_{n=1}^N \|H_n\|_1 \le I(\eL).
\end{equation}
Moreover, the $\Z$-module generated by $H_1, H_2, \dots , H_N$ has index at most $N!$ in the lattice $\eL$.
\end{corollary}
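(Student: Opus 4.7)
The plan is to reduce Corollary \ref{cor3} directly to Theorem \ref{thm4} by choosing an appropriate set of generators for the lattice $\eL$ and transporting the conclusions of the theorem along the resulting parametrization map.

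First, since $\eL$ is a lattice in $L^1(X, \A, \nu)$ of rank $N$, by definition $\eL$ admits a system of $\Z$-generators $F_1, F_2, \dots, F_N$ that are $\R$-linearly independent. Associated with this choice, we form the injective linear map $T:\R^N \to L^1(X, \A, \nu)$ defined as in (\ref{pre25}) by $T(\bxi) = \sum_{n=1}^N \xi_n F_n$. Since the $F_n$ generate $\eL$ as a $\Z$-module and are $\R$-linearly independent, the restriction $T|_{\Z^N}$ is a $\Z$-module isomorphism from $\Z^N$ onto $\eL$. This identification is the key structural observation that makes the proof work, because it converts statements about sublattices of $\Z^N$ into statements about sublattices of $\eL$.

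Next, I would apply Theorem \ref{thm4} to the functions $F_1, F_2, \dots, F_N$ to obtain linearly independent integer vectors $\bb_1, \bb_2, \dots, \bb_N \in \Z^N$ with
\begin{equation*}
\prod_{n=1}^N \|T(\bb_n)\|_1 \le \int_{X^N} \bigl|\Delta(F_1, F_2, \dots , F_N; \bx)\bigr|\ \dnu^N(\bx) = I(\eL),
\end{equation*}
and such that the $\Z$-submodule $\Z\bb_1 + \cdots + \Z\bb_N$ has index at most $N!$ in $\Z^N$. Defining $H_n = T(\bb_n)$ for $n = 1, 2, \dots, N$ yields elements of $\eL$ (as $\Z$-linear combinations of generators) that are $\R$-linearly independent (since $T$ is injective), and the displayed product inequality is precisely (\ref{st16}).

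For the index claim, I would invoke that $T|_{\Z^N}:\Z^N \to \eL$ is a group isomorphism, so it carries the sublattice generated by $\bb_1,\dots,\bb_N$ onto the sublattice generated by $H_1,\dots,H_N$ preserving the index. Therefore $[\eL : \Z H_1 + \cdots + \Z H_N] = [\Z^N : \Z\bb_1 + \cdots + \Z\bb_N] \le N!$. There is no serious obstacle here; the only point requiring a moment's care is the well-definedness of $I(\eL)$, but that was already verified in the paragraph preceding the corollary via the Cauchy–Binet-type identity (\ref{st13}) applied to unimodular changes of generators.
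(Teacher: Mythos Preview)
Your proof is correct and follows exactly the paper's approach: apply Theorem \ref{thm4} to a basis $F_1,\dots,F_N$ of $\eL$ and set $H_n = T(\bb_n)$. The paper's proof is a one-line version of what you wrote; your added remarks on the isomorphism $T|_{\Z^N}$ preserving the index and on the well-definedness of $I(\eL)$ are accurate elaborations.
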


\begin{proof}  We apply Theorem \ref{thm4} to a basis for $\eL$, and then we let $H_n = T(\bb_n)$.
\end{proof}

\section{Heights on finitely generated subgroups}

In this section we return to the situation discussed in the introduction.  Let $k$ be an algebraic number field of degree $d$ over 
$\Q$, $v$ a place of $k$ and $k_v$ the completion of $k$ at $v$.  We select two absolute values from the place $v$.  The 
first is denoted by $\|\ \|_v$ and defined as follows:
\begin{itemize}
\item[(i)] if $v|\infty$ then $\|\ \|_v$ is the unique absolute value on $k_v$ that extends the usual absolute value on $\Q_{\infty} = \R$,
\item[(ii)] if $v|p$ then $\| \ \|_v$ is the unique absolute value on $k_v$ that extends the usual $p$-adic absolute value on $\Q_p$.
\end{itemize}
The second absolute value is denoted by $|\ |_v$ and defined by $|x|_v=\|x\|_v^{d_v/d}$ for all $x$ in 
$k_v$, where $d_v = [k_v:\Q_v]$ is the local degree.  If $\alpha$ belongs to the multiplicative group $k^{\times}$
of nonzero elements in $k$, then these absolute values satisfy the product formula
\begin{equation}\label{prod1}
\sum_v \log |\alpha|_v = 0,
\end{equation}
where the sum on the left of (\ref{prod1}) is over all places $v$ of $k$, and only finitely many terms in the sum are nonzero.
If $\alpha$ is in $k^{\times}$ then the {\it absolute, logarithmic Weil height} (or simply the {\it height}) of $\alpha$ is defined by
\begin{equation}\label{ht1}
h(\alpha) = \sum_v \log^+|\alpha|_v.
\end{equation}
From (\ref{prod1}) and (\ref{ht1}) we obtain the identity
\begin{equation}\label{ht2}
2 h(\alpha) = \sum_v~\bigl|\log |\alpha|_v\bigr|,
\end{equation}
where $|\ |$ (an absolute value without a subscript) is the usual archimedean absolute value on $\R$. 
It can be shown that the value of the sum in (\ref{ht1}) does not depend on the field $k$ that contains $\alpha$.  Thus the height
is well defined on the multiplicative group $\oq$, and the height is constant on each coset of
the torsion subgroup $\Tor\bigl(\oq\bigr)$.  Therefore it is well defined on the quotient $\G = \oQt$.  As noted in the introduction, $\G$ is
a vector space (written multiplicatively) over the field $\Q$ of rational numbers, and $\alpha \mapsto 2h(\alpha)$ is a norm
on $\G$.  

Let $Y$ denote the set of all places $y$ of the field $\oQ$.  Let $k\subseteq \oQ$ be an algebraic 
number field such that $k/\Q$ is a finite Galois extension.  At each place $v$ of $k$ we write
\begin{equation}\label{un0}
Y(k,v) = \{y\in Y: y|v\}
\end{equation}
for the subset of places of $Y$ that lie over $v$.  Clearly we can express $Y$ as the disjoint union
\begin{equation}\label{un1}
Y = \bigcup_v~Y(k,v),
\end{equation}
where the union is over all places $v$ of $k$.  In \cite[section 2]{all2009} the authors show that each subset $Y(k, v)$
can be expressed as an inverse limit of finite sets.  This determines a totally disconnected, compact, Hausdorff topology
in $Y(k, v)$.  Then it follows from (\ref{un1}) that $Y$ is a totally disconnected, locally compact, Hausdorff space.  The
topology induced in $Y$ does not depend on the number field $k$.  It is also shown in \cite[section 3]{all2009} that
for each finite Galois extension $k/\Q$ the absolute Galois group $\Aut(\oQ/k)$ acts transitively and continuously on the elements of 
each compact, open subset $Y(k, v)$.  Moreover, (see \cite[Theorem 4]{all2009}) there exists a regular measure
 $\lambda$ defined on the Borel subsets of $Y$ that is positive
on nonempty open sets, finite on compact sets, and satisfies the identity $\lambda(\tau E) = \lambda (E)$ for all automorphisms
$\tau$ in $\Aut(\oQ/k)$ and all Borel subsets $E$ of $Y$.  The measure $\lambda$ is unique up to a positive multiplicative
constant.  In \cite[Theorem 5]{all2009} we established a choice of $\lambda$ such that
\begin{equation}\label{un2}
\lambda\bigl(Y(k, v)\bigr) = \dfrac{[k_v: \Q_v]}{[k: \Q]}
\end{equation}
for each finite Galois extension $k/\Q$ and each place $v$ of $k$.  More generally, if $k/\Q$ is a finite,
but not necessarily Galois, extension then the identity (\ref{un2}) continues to hold.  

\begin{lemma}\label{lem6}  Let $\lambda$ be the unique regular measure defined on the $\sigma$-algebra $\B$ of Borel subsets of $Y$
which satisfies $\lambda(\tau E) = \lambda (E)$ for all automorphisms $\tau$ in $\Aut(\oQ/k)$ and all Borel subsets $E$ of $Y$, and
which satisfies {\rm (\ref{un2})} for all finite Galois extensions $k/\Q$ and all places $v$ of $k$.  Then the identity 
{\rm (\ref{un2})} holds for all finite, but not necessarily Galois, extensions $k/\Q$. 
\end{lemma}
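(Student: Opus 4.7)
The plan is to reduce the non-Galois case to the Galois case by embedding $k$ into its Galois closure. Let $K/\Q$ be a finite Galois extension with $k\subseteq K$ (for instance, the Galois closure of $k$ inside $\oQ$). Since each place $y\in Y$ of $\oQ$ restricts to a unique place of $K$ and to a unique place of $k$, and since restriction of places is transitive, we have the disjoint union
\begin{equation*}
Y(k,v) = \bigsqcup_{w\mid v} Y(K,w),
\end{equation*}
where the union ranges over the finitely many places $w$ of $K$ lying over $v$. This is the geometric input that makes the reduction work.

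Next I would apply countable (in fact, finite) additivity of $\lambda$ together with the hypothesis that (\ref{un2}) is already known for the Galois extension $K/\Q$. This gives
\begin{equation*}
\lambda\bigl(Y(k,v)\bigr) = \sum_{w\mid v} \lambda\bigl(Y(K,w)\bigr) = \sum_{w\mid v} \frac{[K_w:\Q_w]}{[K:\Q]}.
\end{equation*}

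The remaining step is the local-degree computation. Because $K/\Q$ is Galois, $K/k$ is also Galois, and for any rational prime (or archimedean place) $p$ below $v$ one has $\Q_w = \Q_v = \Q_p$ and $[K_w:\Q_w] = [K_w:k_v]\,[k_v:\Q_v]$. Moreover, the fundamental identity $\sum_{w\mid v}[K_w:k_v] = [K:k]$ holds. Substituting yields
\begin{equation*}
\sum_{w\mid v}[K_w:\Q_w] = [k_v:\Q_v]\sum_{w\mid v}[K_w:k_v] = [k_v:\Q_v]\,[K:k],
\end{equation*}
and therefore
\begin{equation*}
\lambda\bigl(Y(k,v)\bigr) = \frac{[k_v:\Q_v]\,[K:k]}{[K:\Q]} = \frac{[k_v:\Q_v]}{[k:\Q]},
\end{equation*}
which is precisely (\ref{un2}) for the not necessarily Galois extension $k/\Q$.

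The only real obstacle is checking the decomposition $Y(k,v) = \bigsqcup_{w\mid v} Y(K,w)$, and in particular the disjointness and the fact that every $y$ lying over $v$ lies over some place $w$ of $K$. Both follow immediately from the definition of $Y(k,v)$ in (\ref{un0}) and the uniqueness of restrictions of absolute values. Everything else is the standard local-global identity $\sum_{w\mid v}[K_w:k_v] = [K:k]$ applied to the Galois extension $K/k$.
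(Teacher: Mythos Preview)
Your proof is correct and follows essentially the same argument as the paper: both pass to a finite Galois extension $K/\Q$ containing $k$, decompose $Y(k,v)$ as the disjoint union of the $Y(K,w)$ over $w\mid v$, apply the known Galois case of (\ref{un2}), and finish with the degree identity $\sum_{w\mid v}[K_w:k_v]=[K:k]$. The only cosmetic difference is that you spell out the local-degree computation a bit more explicitly than the paper does.
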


\begin{proof}  Let $l/\Q$ be a finite Galois extension
such that $\Q \subseteq k \subseteq l$.  Then for each place $v$ of $k$ we have the disjoint union
\begin{equation}\label{un3}
Y(k, v) = \bigcup_{w|v} Y(l, w),
\end{equation}
where the union on the right of (\ref{un3}) is over all places $w$ of the finite Galois extension $l$ such that $w|v$.  As (\ref{un2}) is
already known to hold for finite Galois extensions, we find that
\begin{align*}\label{un4}
\begin{split}
\lambda\bigl(Y(k, v)\bigr) &= \sum_{w|v} \lambda\bigl(Y(l, w)\bigr)\\
                                            &= \sum_{w|v}  \dfrac{[l_w: \Q_w]}{[l: \Q]}\\
                                            &= \sum_{w|v}  \dfrac{[l_w: k_v]}{[l: k]}\dfrac{[k_v: \Q_v]}{[k:\Q]}\\
                                            &=  \dfrac{[k_v: \Q_v]}{[k:\Q]}.                                           
\end{split}
\end{align*}
This verifies the lemma.
\end{proof}

If $y$ is a place in $Y(k,v)$ we select an absolute value $\|\ \|_y$ from $y$ such that the restriction of $\|\ \|_y$ to $k$ is equal to $\|\ \|_v$.  As the
restriction of $\|\ \|_v$ to $\Q$ is one of the usual absolute values on $\Q$, it follows that this choice
of the normalized absolute value $\|\ \|_y$ does not depend on $k$.  If $\alpha$ is a point in $\G$ then we 
associate $\alpha$ with the continuous, compactly supported function
\begin{equation}\label{ht11}
y\mapsto f_{\alpha}(y) = \log \|\alpha\|_y
\end{equation}
defined on the locally compact Hausdorff space $Y$, (see \cite[equation (1.9)]{all2009}).  Each function (\ref{ht11})
belongs to the real Banach space $L^1(Y, \B, \lambda)$, where $\B$ is the $\sigma$-algebra of Borel subsets of $Y$, and
$\lambda$ is the normalized measure on $\B$ that is invariant with respect to the natural Galois action (see
\cite[Theorem 4]{all2009}).  Moreover, the map
\begin{equation}\label{ht12}
\alpha\mapsto f_{\alpha}
\end{equation}
is a linear transformation from the $\Q$-vector space $\G$ into the real Banach space $L^1(Y, \B, \lambda)$.  Let
\begin{equation*}\label{ht13}
\F = \big\{f_{\alpha}(y): \alpha\in \G\big\}
\end{equation*}
denote the image of $\G$ under this linear map.  Then $\alpha\mapsto 2h(\alpha)$ is a norm on the $\Q$-vector space $\G$, and
$f_{\alpha}\mapsto \|f_{\alpha}\|_1$ is obviously a norm on the $\Q$-vector space $\F$.  With respect to these norms, 
the map $\alpha\rightarrow f_{\alpha}$ is a linear isometry from the vector space $\G$ (written multiplicatively) onto the vector 
space $\F$ (written additively).  It follows from the product formula (see \cite[equation (1.10)]{all2009}), that each function $f_{\alpha}$ 
in $\F$ belongs to the closed subspace
\begin{equation*}\label{ht14}
\X = \Big\{F\in L^1(Y, \B, \lambda): \int_{Y}F(y)\ \dla(y) = 0\Big\}.
\end{equation*}
Then Theorem \ref{thm2} in \cite{all2009} asserts that $\F$ is a dense subset of $\X$ with respect to the $L^1$-norm.

As $\alpha\mapsto f_{\alpha}$ is a linear isometry, we have
\begin{equation}\label{ht15}
h(\alpha) = \hh \int_Y \bigl|f_{\alpha}(y)\bigr|\ \dla(y) 
\end{equation}
at each point $\alpha$ in $\G$.  We wish to extend (\ref{ht15}) to finitely generated subgroups of $\G$, or equivalently to finitely
generated subgroups of $\F$.  To accomplish this we require the following lemma.

\begin{lemma}\label{lem7}  Suppose that $\alpha_1, \alpha_2, \dots , \alpha_N$ are $\Q$-linearly independent elements
of $\G$, then the corresponding functions $f_{\alpha_1}(y), f_{\alpha_2}(y), \dots , f_{\alpha_N}(y)$ are $\R$-linearly 
independent elements of $\X$.
\end{lemma}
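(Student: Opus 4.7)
The plan is to reduce the claim to Northcott's finiteness theorem via a discreteness argument inside a finite-dimensional real subspace of $L^1(Y, \B, \lambda)$.

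Because only finitely many cosets are involved, I may choose an algebraic number field $k$ so that $\alpha_1, \dots, \alpha_N \in \G_k$. Since $\G_k$ is torsion free, the $\Q$-linear independence of $\alpha_1, \dots, \alpha_N$ is equivalent to their $\Z$-linear independence, so the subgroup $\aA \subseteq \G_k$ that they generate is free abelian of rank $N$. The key structural observation is that for $\alpha \in k^{\times}$ and $y \in Y(k, v)$, the value $f_\alpha(y) = \log\|\alpha\|_v$ depends only on the place $v$ of $k$; thus $f_\alpha$ is constant on each compact open subset $Y(k, v)$, so the map $\alpha \mapsto f_\alpha$ sends $\G_k$ into a distinguished subspace of $L^1(Y, \B, \lambda)$.

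Next I would verify that $\alpha \mapsto f_\alpha$ is injective on $\G_k$: if $f_\alpha \equiv 0$ then $\|\alpha\|_v = 1$ at every place of $k$, which forces $\alpha$ to be a root of unity by a classical theorem of Kronecker \cite{kronecker1857}, hence trivial in $\G_k$. Discreteness of the image of $\G_k$ in $L^1(Y, \B, \lambda)$ then follows from Northcott's theorem \cite{northcott1949}: the set $\{\alpha \in \G_k : h(\alpha) \le R\}$ is finite for every $R$, and the isometry $\|f_\alpha\|_1 = 2h(\alpha)$ of (\ref{ht15}) translates this into finiteness of the intersection of the image with any bounded $L^1$-ball.

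Let $V \subseteq L^1(Y, \B, \lambda)$ denote the $\R$-linear span of $f_{\alpha_1}, \dots, f_{\alpha_N}$; it is a finite-dimensional real vector space of dimension at most $N$, carrying its unique Hausdorff vector-space topology, which agrees with the subspace topology from the $L^1$-norm. The image of $\aA$ is a rank-$N$ free abelian subgroup of $V$, and it is discrete in $V$ because it is already discrete in the ambient Banach space. Since a discrete subgroup of a $d$-dimensional real vector space has rank at most $d$, I conclude $\dim_{\R} V \ge N$; combining with the trivial reverse inequality gives $\dim_{\R} V = N$, which is precisely the $\R$-linear independence of $f_{\alpha_1}, \dots, f_{\alpha_N}$. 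The one nontrivial technical step is securing the discreteness, and that is exactly the content of Northcott's theorem once the number field $k$ has been fixed.
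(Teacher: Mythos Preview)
Your argument is correct and takes a genuinely different route from the paper's proof.

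The paper proceeds through the $S$-unit theorem: it fixes a number field $k$ and a finite set $S$ so that all the $\alpha_n$ are $S$-units, chooses a fundamental system $\eta_1,\dots,\eta_{s-1}$, and writes the matrix $\Gamma=\bigl([k_v:\Q_v]\log\|\gamma_n\|_v\bigr)$ as a product $LM$, where $L$ is an integer matrix and $M$ is the $(s-1)\times s$ logarithm matrix of the fundamental units. The non-vanishing of the $S$-regulator gives $\R$-linear independence of the rows of $M$; the $\Q$-independence hypothesis forces the integer rows of $L$ to be $\Q$-independent, hence $\R$-independent; and then $\Gamma=LM$ has $\R$-independent rows, which is the desired conclusion. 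Your approach instead bypasses the unit theorem entirely: you use Kronecker for injectivity and Northcott for discreteness of the image of $\G_k$ in $L^1$, and then invoke the elementary fact that a discrete subgroup of a $d$-dimensional real vector space has rank at most $d$. Since the isometry $\|f_\alpha\|_1=2h(\alpha)$ is already recorded in the paper, and Northcott for $\G_k$ is quoted in the introduction, every ingredient you use is both available and more elementary than the $S$-unit theorem. What your argument gains is conceptual clarity and portability---it works verbatim in any setting where the height has the Northcott property over a fixed field. What the paper's argument gains is an explicit matrix factorization tied to the $S$-regulator, which dovetails with the later computation of $h\bigl(\uU_S(k)\bigr)$ in Theorem~\ref{cor2}; for the bare lemma, however, your route is shorter.
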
 

\begin{proof}  For each $n = 1, 2, \dots , N$, let $\gamma_n$ be a representative in $\oq$ of the coset $\alpha_n$ in $\G = \oQt$. 
Let $k\subseteq \oQ$ be an algebraic number field such that
\begin{equation*}\label{ht16}
\{\gamma_1, \gamma_2, \dots , \gamma_N\} \subseteq k^{\times}.
\end{equation*}
Then let $S$ be a finite subset of places of $k$ such that $S$ contains all archimedean places of $k$, and such that 
\begin{equation*}\label{ht17}
\{\gamma_1, \gamma_2, \dots , \gamma_N\} \subseteq U_S(k),
\end{equation*}
where 
\begin{equation*}\label{ht18}
U_S(k) = \{\beta \in k: |\beta|_v = 1\ \text{for all}\ v\notin S\}
\end{equation*}
denotes the multiplicative group of $S$-units in $k$.  Write $s$ for the cardinality of $S$ and assume that $s \ge 2$.
By the $S$-unit theorem (stated as \cite[Theorem 3.5]{narkiewicz1974}), there exist multiplicatively independent 
elements $\eta_1, \eta_2, \dots , \eta_{s-1}$ in $U_S(k)$ which form a fundamental system of $S$-units.  Write
\begin{equation}\label{ht19}
M = \bigl([k_v:\Q_v] \log \|\eta_r\|_v\bigr)
\end{equation}  
for the associated $(s - 1)\times s$ real matrix, where $r = 1, 2, \dots , s-1$ indexes rows and $v$ in $S$ indexes columns.  As the 
$S$-regulator does not vanish, the $s-1$ rows of the matrix $M$ are $\R$-linearly independent. 

Because the elements $\eta_1, \eta_2, \dots , \eta_{s-1}$ form a fundamental system of $S$-units, there exists an 
$N\times (s - 1)$ integer matrix 
\begin{equation}\label{ht20}
L = \bigl(l_{nr}\bigr),
\end{equation}
where $n = 1, 2, \dots , N$ indexes rows and $r = 1, 2, \dots , s-1$ indexes columns, such that
\begin{equation}\label{ht21}
\log \|\gamma_n\|_v = \sum_{r=1}^{s-1} l_{nr} \log \|\eta_r\|_v
\end{equation}
at each place $v$ in $S$.  Write $\Gamma$ for the $N\times s$ matrix
\begin{equation*}\label{ht22}
\Gamma = \bigl([k_v: \Q_v] \log \|\gamma_n\|_v\bigr),
\end{equation*}
where $n = 1, 2, \dots , N$ indexes rows and $v$ in $S$ indexes columns.  Then (\ref{ht21}) implies that $\Gamma = L M$.  By
hypothesis the rows of $\Gamma$ are $\Q$-linearly independent.  Therefore the rows of $L$ must be $\Q$-linearly independent.
Because $L$ has integer entries, it follows that the rows of $L$ are $\R$-linearly independent.  As the rows
of $M$ are $\R$-linearly independent, we find that the rows of $\Gamma = L M$ are also $\R$-linearly independent.  This proves the lemma. 
\end{proof} 

Let $\aA\subseteq \G$ be a finitely generated subgroup of rank $N$.
The map (\ref{ht12}) is an isometric isomorphism from the $\Q$-vector space $\G$ onto the $\Q$-vector space $\F$, and 
therefore we may identify $\aA$ with its image $\aA^{\prime}$ in $\F$.  If the elements $\alpha_1, \alpha_2, \dots , \alpha_N$
form a basis for $\aA$ as a $\Z$-module, then their images $f_{\alpha_1}(y), f_{\alpha_2}(y), \dots , f_{\alpha_N}(y)$ in $\F$ obviously 
form a basis for $\aA^{\prime}$ as a $\Z$-module in $\F$.  Then Lemma \ref{lem7} asserts that the functions 
$f_{\alpha_1}(y), f_{\alpha_2}(y), \dots , f_{\alpha_N}(y)$ are $\R$-linearly independent elements of the Banach space $\X$,
and therefore they generate a lattice in $\X$ as discussed in section 5.
Therefore we define the height of $\aA$ (or the height of $\aA^{\prime}$) by
\begin{equation}\label{ht25}
h(\aA) = 2^{-N} \int_{Y^N} \bigl|\Delta(f_{\alpha_1}, f_{\alpha_2}, \dots , f_{\alpha_N}; \bwy)\bigr|\ \dla^N(\bwy).
\end{equation}
Because of the identity (\ref{st13}), the integral on the right of (\ref{ht25}) does not depend on the choice of generators for the
subgroup $\aA$, and therefore $h(\aA)$ is well defined on the collection of finitely generated subgroups of $\G$.  Of course 
this height is (aside from the factor $2^{-N}$) a special case of the expression (\ref{st14}) which is well defined on the more 
general collection of finitely generated subgroups in $L^1(Y, \B, \lambda)$.  We note that if $\aA$ has rank $1$, then 
(\ref{ht25}) is the absolute logarithmic height (\ref{ht15}) applied to a generator of $\aA$.

\begin{proof}[Proof of Theorem \ref{thm1}]  Let $\alpha_1, \alpha_2, \dots , \alpha_N$ form a basis for the finitely generated
subgroup $\aA\subseteq \G$.  By Lemma \ref{lem7} the functions $f_{\alpha_1}, f_{\alpha_2}, \dots , f_{\alpha_N}$ are $\R$-linearly
independent elements of the Banach space $\X \subseteq L^1(Y, \B, \lambda)$.  Therefore we can apply Theorem \ref{thm4} with 
$F_n = f_{\alpha_n}$.  By that result there exist linearly independent points $\bb_1, \bb_2, \dots , \bb_N$ in the integer lattice 
$\Z^N$ which satisfy the inequality (\ref{st11}).  Write
\begin{equation*}\label{ht30}
\bigl(\bb_1\ \bb_2\ \cdots \bb_N\bigr) = (b_{mn})
\end{equation*}
for the corresponding $N\times N$ matrix having the vector $\bb_n$ as its $n$th column.  For $n = 1, 2, \dots , N$ let
$\beta_n$ in $\G$ be defined by
\begin{equation}\label{ht31}
f_{\beta_n}(y) = \sum_{m = 1}^N b_{mn} f_{\alpha_m}(y).
\end{equation}
Then (\ref{st11}) can be written as
\begin{equation}\label{ht32}
\prod_{n = 1}^N \Big\{\int_Y \bigl|f_{\beta_n}(y)\bigr|\ \dla(y)\Big\} 
	\le \int_{Y^N} \bigl|\Delta(f_{\alpha_1}, f_{\alpha_2}, \dots , f_{\alpha_N}; \bwy)\bigr|\ \dla^N(\bwy).
\end{equation}
The inequality (\ref{setup25}) follows from (\ref{ht32}) by applying identities (\ref{ht15}) and (\ref{ht25}) for the height.

The sublattice generated by the vectors $\bb_1, \bb_2, \dots , \bb_N$ has index at most $N!$ in $\Z^N$.   Then it follows from
(\ref{ht31}) that the subgroup $\bB \subseteq \aA$ generated by $\beta_1, \beta_2, \dots , \beta_N$ has the same index, and 
in particular has index at most $N!$.
\end{proof}

\section{Application of Siegel's Lemma}

Again we suppose that $\alpha_1, \alpha_2, \dots , \alpha_N$ are elements of $\G$, but in this section we
assume that these elements generate a multiplicative subgroup $\aA\subseteq \G$ with positive rank $M$, and we assume that
$1 \le M < N$.  Hence the $\Z$-module of linear dependencies
\begin{equation}\label{siegel0}
\eZ = \Big\{\bz \in \Z^N : \prod_{n=1}^N \alpha_n^{z_n} = 1\Big\}.
\end{equation}
has rank $L = N-M$.  We use the functions $f_{\alpha_1}(y), f_{\alpha_2}(y), \dots , f_{\alpha_N}(y)$ in $\F$, to define
an $M\times N$ matrix
\begin{equation}\label{siegel1}
U(\balpha; \bwy) = \bigl(f_{\alpha_n}(y_l)\bigr)
\end{equation}
with entries in $\F$, where $l = 1, 2, \dots , M$ indexes rows and $n = 1, 2, \dots , N$ indexes
columns.  Then we define a nonnegative function $Q:Y^M\rightarrow [0, \infty)$ by
\begin{align}\label{siegel2}
\begin{split}
Q(\balpha; \bwy) &= \Big\{\det\bigl(U(\balpha; \bwy) U(\balpha; \bwy)^T\bigr)\Big\}^{\h}\\
		 &= \Big\{\det\Bigl(\sum_{n=1}^N f_{\alpha_n}(y_l) f_{\alpha_n}(y_m)\Bigr)\Big\}^{\h}.
\end{split}
\end{align}
Using Hadamard's inequality (see \cite[Theorem 30]{hardy1934}) we find that
\begin{align}\label{siegel3}
\begin{split}
Q(\balpha; \bwy) &\le \prod_{m=1}^M \Big\{\sum_{n=1}^N \bigl(f_{\alpha_n}(y_m)\bigr)^2\Big\}^{\h}\\ 
                               &\le \prod_{m=1}^M \Big\{\sum_{n=1}^N \bigl|f_{\alpha_n}(y_m)\bigr|\Big\},
\end{split}
\end{align}
at each point $\bwy$ in $Y^M$.  By integrating both sides of (\ref{siegel3}) over $Y^M$, we get the bound
\begin{align}\label{siegel4}
\begin{split}
2^{-M} \int_{Y^M} Q(\balpha; \bwy)\ \dla^M(\bwy) 
	&\le 2^{-M} \prod_{m=1}^M \Big\{\sum_{n=1}^N \int_Y \bigl|f_{\alpha_n}(y_m)\bigr|\ \dla(y_m)\Big\}\\
	&= \Big\{\sum_{n=1}^N h(\alpha_n)\Big\}^M.
\end{split}
\end{align}
In particular (\ref{siegel4}) shows that the function $\bwy\mapsto Q(\bwy)$ is integrable 
on $Y^M$ with respect to the product measure $\lambda^M$.

\begin{theorem}\label{thm5}  Let $\alpha_1, \alpha_2, \dots , \alpha_N$ be
elements of $\G$ which generated a subgroup $\aA$ of positive rank $M$.
If $1 \le M < N$ then there exist $L = N-M$ linearly independent elements $\bz_1, \bz_2, \dots , \bz_L$ in the 
$\Z$-module $\eZ$ defined by {\rm (\ref{siegel0})}, such that
\begin{equation}\label{siegel8}
\bigg\{\prod_{l=1}^L |\bz_l|_{\infty}\bigg\} h(\aA) \le 2^{-M} \int_{Y^M} Q(\balpha; \bwy)\ \dla^M(\bwy).
\end{equation}
\end{theorem}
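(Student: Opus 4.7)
\textbf{Proof plan for Theorem \ref{thm5}.}
The plan is to realise $\eZ$ as the integer kernel of an explicit $M\times N$ integer matrix and then invoke the basis form of Siegel's lemma from \cite{bombieri1983}.

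Since $\aA\subseteq \G$ has rank $M$ and $\G$ is torsion free, I choose a $\Z$-basis $\eta_1,\eta_2,\dots,\eta_M$ of $\aA$.  For each $n=1,2,\dots,N$ there is a unique vector of integers $(c_{1n},c_{2n},\dots,c_{Mn})$ with
\begin{equation*}
\alpha_n=\prod_{m=1}^M\eta_m^{c_{mn}},
\end{equation*}
which determines an $M\times N$ integer matrix $C=(c_{mn})$.  Because the $\alpha_n$ generate $\aA$ the columns of $C$ span $\Z^M$, so $C$ has rank $M$; because $\eta_1,\dots,\eta_M$ are multiplicatively independent, an integer vector $\bz$ belongs to $\eZ$ if and only if $C\bz=\bo$.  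Thus $\eZ$ is the integer kernel of $C$ and has rank $L=N-M$.

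Applying the $\Q$-rational basis form of Siegel's lemma from \cite{bombieri1983} to the integer matrix $C$, I obtain a $\Z$-basis $\bz_1,\dots,\bz_L$ of $\eZ$ satisfying
\begin{equation*}
\prod_{l=1}^L|\bz_l|_\infty\le\sqrt{\det(CC^T)}.
\end{equation*}
Next I identify the integrand appearing in (\ref{siegel8}).  The map $\alpha\mapsto f_\alpha$ is $\Q$-linear, so $f_{\alpha_n}=\sum_{m=1}^M c_{mn}f_{\eta_m}$.  Letting $E(\bwy)=\bigl(f_{\eta_m}(y_l)\bigr)$ be the $M\times M$ matrix associated with the basis $\eta_1,\dots,\eta_M$ and a point $\bwy\in Y^M$, this yields the factorisation $U(\balpha;\bwy)=E(\bwy)\,C$.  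Consequently
\begin{equation*}
Q(\balpha;\bwy)^2=\det\bigl(U(\balpha;\bwy)U(\balpha;\bwy)^T\bigr)=\bigl(\det E(\bwy)\bigr)^2\det(CC^T),
\end{equation*}
so $Q(\balpha;\bwy)=|\det E(\bwy)|\sqrt{\det(CC^T)}$.  Integrating and applying the definition (\ref{ht25}) of the height to the basis $\eta_1,\dots,\eta_M$ of $\aA$ gives
\begin{equation*}
\int_{Y^M}Q(\balpha;\bwy)\,\dla^M(\bwy)=\sqrt{\det(CC^T)}\,\int_{Y^M}\bigl|\Delta(f_{\eta_1},\dots,f_{\eta_M};\bwy)\bigr|\,\dla^M(\bwy)=2^M h(\aA)\sqrt{\det(CC^T)}.
\end{equation*}
Combining with the Siegel bound above immediately produces (\ref{siegel8}).

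The main technical content is the matrix identity $U=EC$ and the resulting exact factorisation $Q(\balpha;\bwy)=|\det E(\bwy)|\sqrt{\det(CC^T)}$: once this is in place, the $\sqrt{\det(CC^T)}$ furnished by Bombieri and Vaaler's Siegel-type bound plugs in at precisely the right power and the theorem follows without any slack.  A minor point to confirm is that the constant from \cite{bombieri1983} is indeed $1$ rather than a function of $L$, so that the inequality is sharp in the form stated; the gcd $D$ of the $M\times M$ minors of $C$ only strengthens the bound.
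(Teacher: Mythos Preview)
Your proof is correct and follows essentially the same route as the paper: choose a $\Z$-basis of $\aA$, express the $\alpha_n$ via an integer $M\times N$ matrix, apply the Bombieri--Vaaler form of Siegel's lemma to that matrix, factor $U(\balpha;\bwy)$ through the basis matrix, and match the resulting $\sqrt{\det CC^T}$ against $h(\aA)$ via the definition (\ref{ht25}). The only point you leave slightly loose is the value of the gcd $D$ of the $M\times M$ minors of $C$; but your own observation that the columns of $C$ span $\Z^M$ already settles this, since it yields an integer matrix $B$ with $CB=\bone_M$, and Cauchy--Binet then forces $D=1$---exactly the argument the paper gives.
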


\begin{proof}  Let $\gamma_1, \gamma_2, \dots , \gamma_M$ form a basis for $\aA$.  Then the functions 
\begin{equation}\label{siegel9}
f_{\gamma_1}(y), f_{\gamma_2}(y) , \dots , f_{\gamma_M}(y)
\end{equation}
form a basis for the image of $\aA$ in $\F$.  Hence there exists an $M\times N$ matrix $A = (a_{mn})$ with integer entries such that
\begin{equation}\label{siegel12}
f_{\alpha_n}(y) = \sum_{m=1}^M a_{mn} f_{\gamma_m}(y)
\end{equation}
for each $n = 1, 2, \dots , N$.  And there exists an $N\times M$ matrix $B = (b_{nm})$ with integer entries such that
\begin{equation}\label{siegel13}
f_{\gamma_m}(y) = \sum_{n=1}^N b_{nm} f_{\alpha_n}(y)
\end{equation}
for each $m = 1, 2, \dots , M$.  A vector $\bz$ in $\Z^N$ belongs to $\eZ$ if and only if
\begin{align*}\label{siegel14}
\begin{split}
0 &= \sum_{n=1}^N z_n f_{\alpha_n}(y)\\
    &= \sum_{n=1}^N z_n \sum_{m=1}^M a_{mn} f_{\gamma_m}(y)\\
    &= \sum_{m=1}^M \Big\{\sum_{n=1}^N a_{mn} z_n \Big\} f_{\gamma_m}(y).
\end{split}
\end{align*}
Because the functions (\ref{siegel9}) form a basis, we find that
\begin{equation}\label{siegel15}
\eZ = \big\{\bz \in \Z^N: A \bz = \bo\big\}.
\end{equation}
By the basis form of Siegel's Lemma (see \cite[Theorem 2]{bombieri1983}), there exist $L = N-M$ linearly independent
vectors $\bz_1, \bz_2, \dots , \bz_L$ in $\eZ$ such that
\begin{equation}\label{siegel16}
\prod_{l=1}^L |\bz_l|_{\infty} \le D^{-1}\big\{\det AA^T\big\}^{\h},
\end{equation}
where $D$ is the greatest common divisor of the collection of all $M\times M$ subdeterminants of the matrix $A$.
Combining (\ref{siegel12}) and (\ref{siegel13}) leads to the identity $AB = \bone_M$, where $\bone_M$ denotes the 
$M\times M$ identity matrix.  As both $A$ and $B$ have integer entries, it follows from the Cauchy-Binet
identity (see \cite[Lemma 2, Chapter I]{cassels1971}) that $D = 1$.

Let
\begin{equation*}\label{siegel21}
V(\bgamma; \bwy) = \big(f_{\gamma_m}(y_l)\bigr)
\end{equation*}
denote the $M\times M$ matrix with entries in $\X$, where $l = 1, 2, \dots , M$ indexes rows and $m = 1, 2, \dots , M$ indexes
columns.  Then (\ref{siegel1}) and (\ref{siegel12}) imply that
\begin{equation*}\label{siegel22}
U(\balpha; \bwy) = V(\bgamma; \bwy) A,
\end{equation*}
and therefore
\begin{align*}\label{siegel23}
\begin{split}
Q(\balpha; \bwy)^2 &= \det\bigl(U(\balpha; \bwy) U(\balpha; \bwy)^T\bigr)\\
                                   &= \det V(\bgamma; \bwy) \det AA^T \det V(\bgamma; \bwy)^T\\
                                   &= \det AA^T \bigl(\det V(\bgamma; \bwy)\bigr)^2.
\end{split}
\end{align*}
It follows using (\ref{pre51}) that
\begin{equation}\label{siegel24}
Q(\balpha; \bwy) = \big\{\det AA^T\big\}^{\h} \bigl|\Delta(f_{\gamma_1}, f_{\gamma_2}, \dots , f_{\gamma_M}; \bwy)\bigr|.
\end{equation}
Integrating both sides of (\ref{siegel24}) over $Y^M$ with respect to the product measure $\lambda^M$ leads to the identity
\begin{align}\label{siegel25}
\begin{split}
2^{-M} \int_{Y^M} &Q(\balpha; \bwy)\ \dla^M(\bwy)\\ 
		&= 2^{-M} \big\{\det AA^T\big\}^{\h} 
                   \int_{Y^M} \bigl|\Delta(f_{\gamma_1}, f_{\gamma_2}, \dots , f_{\gamma_M}; \bwy)\bigr|\ \dla^M(\bwy)\\
		&= \big\{\det AA^T\big\}^{\h} h(\aA).
\end{split}
\end{align}
The inequality (\ref{siegel8}) follows now be combining (\ref{siegel16}) and (\ref{siegel25}).
\end{proof}

\begin{proof}[Proof of Theorem \ref{thm2}]  We apply Theorem \ref{thm1} to the subgroup $\aA \subseteq \G$
of rank $M$.  By that result there exist $M$ multiplicatively independent elements $\beta_1, \beta_2, \dots , \beta_M$ in
$\aA$ such that
\begin{equation}\label{siegel30}
h(\beta_1)h(\beta_2) \cdots h(\beta_M) \le h(\aA).
\end{equation}
We use the bound (\ref{siegel30}) on the left hand side of (\ref{siegel8}), and we use
(\ref{siegel4}) on the right hand side of (\ref{siegel8}).  In this way we arrive at the inequality (\ref{setup8}) in
the statement of Theorem \ref{thm2}.
\end{proof}

\begin{proof}[Proof of Corollary \ref{cor2}]  Let $\eta_1, \eta_2, \dots , \eta_{s-1}$ be multiplicatively independent
elements in $U_S(k)$ which form a fundamental system of $S$-units.  Then
\begin{equation}\label{siegel40}
\big\{f_{\eta_1}(y), f_{\eta_2}(y), \dots , f_{\eta_{s-1}}(y)\big\}
\end{equation}
is the image of this fundamental system in $\G_k$, and therefore (\ref{siegel40}) forms a basis 
for the group $\uU_S(k)$.  At each place $v$ of $k$ let $E_v = \{y\in Y: y|v\}$, and write $\chi_{E_v}(y)$ for
the characteristic function of the compact set $E_v$.  Then at each point $y$ in $Y$ we have the representation
\begin{equation}\label{siegel44}
f_{\eta_l}(y) = \sum_{v\in S} \log\|\eta_l\|_v \chi_{E_v}(y),\quad\text{for}\quad l = 1, 2, \dots , s-1.
\end{equation}
Clearly (\ref{siegel44}) is analogous to the representation (\ref{pre76}) in the statement
of Lemma \ref{lem3}, but now the disjoint subsets are 
\begin{equation*}\label{siegel45}
\big\{E_v: v\in S\big\},
\end{equation*}
and they are indexed by the places in $S$.  Let $A$ denote the $(s-1)\times s$ real matrix
\begin{equation}\label{siegel46}
A = \bigl(\log\|\eta_l\|_v \bigr)
\end{equation}
where $l = 1, 2, \dots , s-1$ indexes rows and $v$ in $S$ indexes columns.  For $\bwy$ in $Y^{(s-1)}$ let $Z(\bwy)$ denote
the $s\times (s-1)$ matrix
\begin{equation}\label{siegel47}
Z(\bwy) = \bigl(\chi_{E_v}(y_n)\bigr)
\end{equation}
where $v$ in $S$ indexes rows and $n = 1, 2, \dots , s-1$ indexes columns.

From (\ref{ht25}) we find that
\begin{equation}\label{siegel50}
h\bigl(\uU_S(k)\bigr) = 2^{1-s} \int_{Y^{s-1}} \bigl|\Delta(f_{\eta_1}, f_{\eta_2}, \dots , f_{\eta_{s-1}}; \bwy)\bigr|\ \dla^{(s-1)}(\bwy).
\end{equation}
We argue exactly as in our derivation (\ref{pre102}), and obtain the identity 
\begin{align}\label{siegel51}
\begin{split}
\int_{Y^{s-1}} \bigl|\Delta(f_{\eta_1}, f_{\eta_2},& \dots , f_{\eta_{s-1}}; \bwy)\bigr|\ \dla^{(s-1)}(\bwy)\\
     &= (s-1)! \sum_{\substack{I\subseteq S\\|I| = s-1}} \bigl|\det A_I\bigr| \prod_{v\in I} \lambda(E_v),
\end{split}
\end{align}
where the sum on the right of (\ref{siegel51}) runs over subsets $I \subseteq S$ with cardinality $s-1$.  Using Lemma \ref{lem6},
for each subset $I\subseteq S$ with cardinality $s-1$ we find that
\begin{align}\label{siegel52}
\begin{split}
\bigl|\det A_I\bigr| \prod_{v\in I} \lambda(E_v) &= [k: \Q]^{1-s} \bigl|\det A_I\bigr| \prod_{v\in I} [k_v: \Q_v]\\
                                             &= [k: \Q]^{1-s} \bigl|\det \bigl([k_v: \Q_v] \log \|\eta_l\|_v\bigr)_I\bigr|\\
                                             &= [k: \Q]^{1-s} \Reg_S(k),
\end{split}
\end{align}
where $\Reg_S(k)$ denotes the $S$-regulator.  Finally, we combine (\ref{siegel50}), (\ref{siegel51}) and (\ref{siegel52}).  In this
way we obtain the identity
\begin{equation}\label{siegel53}
h\bigl(\uU_S(k)\bigr) = \dfrac{(s-1)! \Reg_S(k)}{\bigl(2[k: \Q]\bigr)^{s-1}} \sum_{\substack{I\subseteq S\\|I| = s-1}} 1
                                      = \dfrac{s! \Reg_S(k)}{\bigl(2 [k: \Q]\bigr)^{s-1}}.
\end{equation}
This proves the corollary.
\end{proof}


\today
\end{document}